\numberwithin{equation}{section}
\newtheorem{theorem}{Theorem}[section]
\newtheorem{defn}[theorem]{Definition}
\newtheorem{proposition}[theorem]{Proposition}
\newtheorem{corollary}[theorem]{Corollary}
\newtheorem{lemma}[theorem]{Lemma}
\begin{document}
	\title[Neumann Problem in the Heisenberg Group]{A Neumann type problem on an unbounded domain in the Heisenberg group}
	
	\author[A. Pandey]{Ashutosh Pandey${^1}^*$}
	\address{$^1$Department of Mathematics, Faculty of Mathematical Sciences, University of Delhi, Delhi, India,}
   \email{$^1$ashutoshpandey4521@gmail.com}
	\author[M.M. Mishra]{Mukund Madhav Mishra${^2}$}
	\address{$^2$Department of Mathematics, Hansraj College, University of Delhi, Delhi, India,}
	\email{$^2$mukund.math@gmail.com}
	\author[S.Dubey]{Shivani Dubey$^3$}
	\address{$^3$Department of Mathematics, Vivekananda College, University of Delhi, Delhi, India,}
	\email{$^3$shvndb@gmail.com}
	\thanks{$^*$Corresponding author}

	\begin{abstract}
		We discuss the wellposedness of the Neumann problem on a half-space for the Kohn-Laplacian in the Heisenberg group. We then construct the Neumann function and explicitly represent the solution of the associated inhomogeneous problem.			
	\end{abstract}

	\date{\today}
	\keywords{Neumann problem, Heisenberg group, sub-Laplacian, horizontal normal vectors, Neumann function.}
	
	\subjclass[2010]{31B20, 35H20, 35N15, 45B05, 65N80.}
	\maketitle
	
	\linenumbers

	\section{\bf{{Introduction}}}
	\subsection{General Background and Motivation}
	The Heisenberg group $\mathbb{H}_n$ being one of the simplest example of a non-abelian and a non-compact Lie group finds a special place in the study of harmonic analysis and potential theory. The potential theoretic aspect demands a notion of harmonic functions which, on the Heisenberg group, are given by a Laplace like operator, known as the Kohn-Laplacian of $\mathbb{H}_n$. The Kohn-Laplacian can either be visualized as the generalization of the well-known Laplace-Beltrami operator on Riemannian spaces or it can be seen as the unique (up to a multiplicative constant) homogeneous differential operator of degree two that is left-invariant and rotation invariant \cite{thangavelu}. A fundamental solution for this operator was obtained by Folland \cite{follandfunda}. This existence of the fundamental solution for the Kohn-Laplacian ensures the hypoellipticity \cite{hormander} of this operator. The Dirichlet problem and its wellposedness on $\mathbb{H}_n$ have been discussed in \cite{principede,jerison1,jerison2}. An integral kernel, called the Green's function, which is used to solve the associated inhomogeneous Dirichlet problem, was first obtained by Kor\'{a}nyi \cite{korhornor} when the boundary data enjoys certain symmetry properties. For similar boundary data and for various domains in the generalised Heisenberg groups and in particular the Heisenberg group, the Green's functions have been discussed in \cite{green3,ruzhanskygreen,green1,green2}.\\
	An equally interesting problem \textit{viz.} the Neumann problem on $\mathbb{H}_n$ is represented by
	\begin{equation}
	\left\{\begin{array}{l}
	\Delta_{\mathbb{H}_n}u = 0 \ \  \text{in} \ \Omega,\\
	\frac{\partial}{\partial n} u=g \ \ \text{on} \ \partial\Omega,\\
	\end{array}\right.
	\end{equation}
	where $\Omega$ is an open domain in $\mathbb{H}_n$ and $n$ denote the outward unit normal at the boundary $\partial\Omega.$ The above problem for a bounded domain in $\mathbb{H}_n$ is discussed in \cite{shivani} and in a more generalised set up in \cite{sir}.\\
	However the study of the Neumann problem on $\mathbb{H}_n$ so far has been very much confined to bounded domains only. The analysis of a similar problem on an unbounded set up involves many improper integrals and the corresponding approximations, tackling which poses a different challenge. Hence, in order to establish the wellposedness of the problem, the situation demands certain decay conditions on the boundary data. For example, a similar problem in the classical case requires the boundary data to have a compact support \cite{wanghalfspace}.\\
	In this article, we start with a model unbounded domain, namely the upper half-space $\Omega=\{(\zeta,t)\in \mathbb{H}_n:t>0\}$ and analyse the Neumann problem. We then propose an additional condition on the boundary data so that the integrals converge near infinity and hence obtain the necessary and sufficient conditions for the solvability of the Neumann problem. Later we construct a Green's type function that solves the associated inhomogeneous problem for a circular data.
	\subsection{Definitions and Preliminary Results}
Consider the set $\mathbb{C}^n\times \mathbb{R}=\{(\zeta,t):\zeta\in\mathbb{C}^n, t\in\mathbb{R}\}$ and the following composition law
\begin{equation} \label{binaryoperation}
(\zeta,t)(\eta,s)=(\zeta+\eta, \ t+s+2\Im(\zeta\cdot\bar{\eta})),
\end{equation}
where $\zeta\cdot\bar{\eta}$ is the usual Hermitian inner product in $\mathbb{C}^n.$ It can be easily checked that \cref{binaryoperation} turns $\mathbb{C}^n\times \mathbb{R}$ into a Lie group, known as the \textit{Heisenberg group} and denoted by $\mathbb{H}_n.$
Let $\mathfrak{h}$ denote the vector space of left-invariant vector fields on $\mathbb{H}_n.$ The space $\mathfrak{h}$ is closed with respect to the bracket operation $[\alpha,\beta]=\alpha\beta-\beta\alpha.$ With this bracket, $\mathfrak{h}$ is referred to as the Lie algebra of $\mathbb{H}_n$ which is well discussed in \cite{stein}. If $\mathfrak{z}$ denote the center in $\mathfrak{h}$ then we have the following stratification
$$\mathfrak{h}=\mathfrak{v}\oplus\mathfrak{z},$$
where $\mathfrak{v}=\mathfrak{z}^\perp,$ and called the horizontal layer in $\mathfrak{h}$. We choose the spanning set for $\mathfrak{v}$ and $\mathfrak{z},$ respectively denoted by $\{X_j,Y_j; 1\leq j\leq n\}$ and $\{T\},$ where $X_j, Y_j$ and $T$ are defined as $$X_j=\frac{\partial}{\partial x_j}+2y_j\frac{\partial}{\partial t}, \ \ Y_j=\frac{\partial}{\partial y_j}-2x_j\frac{\partial}{\partial t}, \ T=\frac{\partial}{\partial t},$$
and $\zeta_j=x_j+\iota y_j.$ We define the complex vector fields
$$Z_j=\frac{1}{2}(X_j-\iota Y_j)=\frac{\partial}{\partial \zeta_j}+\iota \bar{\zeta_j}\frac{\partial}{\partial t}, \ \ \ j=1,...,n,$$
$$\bar{Z_j}=\frac{1}{2}(X_j+\iota Y_j)=\frac{\partial}{\partial \bar{\zeta_j}}-\iota \zeta_j\frac{\partial}{\partial t}, \ \ \ j=1,...,n.$$
\noindent Explicitly, the Kohn-Laplacian is given by
$$\Delta_{\mathbb{H}_n}=-\sum_{j=1}^{n} (X_j^2+Y_j^2).$$
An infinitesimal metric that is consistent with the group structure of $\mathbb{H}_n$, qualifies to be a sub-Riemannian metric which is thoroughly discussed in \cite{koranyi1983geometric}. As in \cite{korhornor}, this metric is given by an inner product $(\cdot,\cdot)_0$ on $W=$ span$\{X_j, Y_j; 1\leq j\leq n\}$ and turns $W$ into an orthonormal system. All vectors in $W$ are called \textit{horizontal} and any vector that is not in $W$ is said to have infinite length.\\
The \textit{horizontal gradient} of a smooth function $F$ on $\mathbb{H}_n$ is defined as the unique horizontal vector $\nabla_0 F$ such that
$$( \nabla _0F,v)_0=v\cdot F,$$
for all horizontal vectors $v$. Equivalently, we have
$$\nabla_0F=\sum_{j=1}^{n} \{(X_jF)X_j+(Y_jF)Y_j\}=2\sum_{j=1}^{n} \{(\bar{Z_j}F)Z_j+(Z_jF)\bar{Z_j}\}.$$
A \textit{horizontal normal} unit vector pointing outwards for a domain $\{F<0\}$ where $\{F=0\}$ is a hypersurface in $\mathbb{H}_n,$ is defined as
\begin{equation} \label{horizontal unit vector}
	\frac{\partial}{\partial n_0}=\frac{1}{\lvert\lvert\nabla_0 F\lvert\lvert_0}\nabla_0F.
\end{equation} 
For the half-space $\Omega$ in particular, $F(\zeta,t)=t.$\\
From \cite{korhornor}, we have
\begin{equation} \label{derivativeformula}
	\frac{\partial}{\partial n_0}=\iota\frac{E-\bar{E}}{\abs{\zeta}},
\end{equation}
for $(\zeta,t)\in\partial\Omega$ such that $\abs{\zeta}\neq0$ and $E=\sum_{j=1}^{p}\zeta_j Z_j.$

\noindent In rest of the paper, the points $\alpha$ and $\beta$ in $\mathbb{H}_n$ will denote $(\zeta,t)$ and $(\zeta',t')$ respectively.
	\begin{theorem}
		(Folland \ \cite{follandfunda}) There exist a positive constant $c$ such that $$g(\alpha):= c \ \mathfrak{p}(\alpha)^{-2n},$$
		where $\mathfrak{p}$ is the homogeneous norm on $\mathbb{H}_n$ and is given by $\mathfrak{p}(\alpha)={(\vert \zeta\vert^4+t^2)}^{\frac{1}{4}}.$		
		This $g$ is the fundamental solution for the operator $\Delta_{\mathbb{H}_n}$, that is,
$$\Delta_{\mathbb{H}_n}g=-\delta.$$
	\end{theorem}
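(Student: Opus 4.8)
The plan is to establish $\Delta_{\mathbb{H}_n} g = -\delta$ distributionally by separating the behaviour of $g$ away from the origin from its singularity at $0$. Two preliminary facts organize the argument. First, with respect to the dilations $\delta_r(\zeta,t)=(r\zeta,r^2 t)$ the group $\mathbb{H}_n$ has homogeneous dimension $Q=2n+2$, so $g=c\,\mathfrak{p}^{-2n}=c\,\mathfrak{p}^{-(Q-2)}$ is homogeneous of degree $2-Q$ and, since $2n<Q$, is locally integrable near $0$; hence $g$ is a well-defined distribution. Second, because $X_j^{*}=-X_j$ and $Y_j^{*}=-Y_j$, the operator $\Delta_{\mathbb{H}_n}$ is formally self-adjoint, so it is enough to prove $\int_{\mathbb{H}_n} g\,\Delta_{\mathbb{H}_n}\varphi = -\varphi(0)$ for every $\varphi\in C_c^{\infty}(\mathbb{H}_n)$.

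First I would check that $\Delta_{\mathbb{H}_n} g=0$ on $\mathbb{H}_n\setminus\{0\}$ by direct computation. Writing $r=\abs{\zeta}^2$ and expanding the squares, one gets $\Delta_{\mathbb{H}_n}=-\bigl(\Delta_{\mathbb{C}^n}+4\,\Theta\,\partial_t+4r\,\partial_t^{2}\bigr)$, where $\Delta_{\mathbb{C}^n}$ is the Euclidean Laplacian in $\zeta$ and $\Theta=\sum_{j=1}^{n}(y_j\partial_{x_j}-x_j\partial_{y_j})$ is a rotation field. Since $g$ depends on $\zeta$ only through $r$, the cross term $\Theta g$ vanishes and, using $\Delta_{\mathbb{C}^n}g=4r\,g_{rr}+4n\,g_r$ for radial $g$, the identity reduces to $4r\,g_{rr}+4n\,g_r+4r\,g_{tt}=0$ with $g=(r^2+t^2)^{-n/2}$; collecting the two powers $(r^2+t^2)^{-n/2-1}$ and $(r^2+t^2)^{-n/2-2}$ shows they cancel.

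For the singular part I would excise a Kor\'anyi ball $B_\epsilon=\{\mathfrak{p}<\epsilon\}$ and apply Green's second identity for $\Delta_{\mathbb{H}_n}$ on $\mathbb{H}_n\setminus B_\epsilon$. The interior integral vanishes by the harmonicity just established, leaving boundary terms on $\partial B_\epsilon$ expressed through the horizontal normal derivative $\partial/\partial n_0$ of \cref{horizontal unit vector}. A homogeneity count then controls the limit $\epsilon\to0$: the term carrying $\partial\varphi/\partial n_0$ is of size $\epsilon^{-(Q-2)}\cdot\epsilon^{Q-1}=\epsilon$ and drops out, while the term carrying $\partial g/\partial n_0$ pairs a factor homogeneous of degree $-(Q-1)$ against a surface measure scaling like $\epsilon^{Q-1}$, hence converges to $c''\,\varphi(0)$ for a scale-invariant constant $c''$. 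One then fixes $c$ so that $c''=-1$.

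The step I expect to be the main obstacle is the precise evaluation of $c''$, equivalently of the normalizing constant $c$, since this requires the exact induced surface measure on the Kor\'anyi sphere together with the behaviour of $\nabla_0\mathfrak{p}$ on it. A cleaner existence argument that sidesteps this computation proceeds by homogeneity: $\Delta_{\mathbb{H}_n}g$ is supported at $\{0\}$ by the harmonicity off the origin and is homogeneous of degree $-Q$; since any distribution supported at a point is a finite combination of derivatives of $\delta$, each nontrivial $\partial^{\alpha}\delta$ being homogeneous of degree strictly below $-Q$ in the parabolic scaling, only the $\delta$ term survives and $\Delta_{\mathbb{H}_n}g=c''\delta$. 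Establishing $c''\neq0$ — so that $g$ is a genuine fundamental solution — still reduces to the same boundary computation, which is the crux of Folland's result.
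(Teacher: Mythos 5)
The paper itself gives no proof of this statement---it is quoted from Folland's 1973 note with a citation---so there is no in-text argument to compare against; your proposal is, in substance, the standard proof (essentially Folland's original one), and it is correct. Your reduction $\Delta_{\mathbb{H}_n}=-\bigl(\Delta_{\mathbb{C}^n}+4\Theta\partial_t+4r\partial_t^2\bigr)$ is right, $\Theta g=0$ for $g$ radial in $\zeta$, and the identity $rg_{rr}+ng_r+rg_{tt}=0$ for $g=(r^2+t^2)^{-n/2}$ does cancel exactly; the bookkeeping with $Q=2n+2$ (local integrability of $\mathfrak{p}^{2-Q}$, the $O(\epsilon)$ term carrying $\partial\varphi/\partial n_0$, the scale-invariant term carrying $\partial g/\partial n_0$) is likewise sound, with the caveat that the measure that scales like $\epsilon^{Q-1}$ on Kor\'anyi spheres is the weighted measure $d\sigma=\frac{\lVert\nabla_0\mathfrak{p}\rVert_0}{\lVert\nabla\mathfrak{p}\rVert}\,ds$ appearing in the paper's Green formula, not the Euclidean surface element $ds$ alone, which is anisotropic under the dilations; your phrasing suggests you know this, but it should be said explicitly. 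Two supplements would tighten the write-up. First, the nonvanishing $c''\neq 0$, which you leave as the crux, does not actually require evaluating the surface integral: on $\partial B_\epsilon$ one has $\partial g/\partial n_0=-2nc\,\mathfrak{p}^{-2n-1}\lVert\nabla_0\mathfrak{p}\rVert_0$ with $\lVert\nabla_0\mathfrak{p}\rVert_0=\abs{\zeta}/\mathfrak{p}$, so the integrand is single-signed and vanishes only on the characteristic set (which is $d\sigma$-null), forcing $c''\neq0$; alternatively, if $c''=0$ then $g$ would be globally $\Delta_{\mathbb{H}_n}$-harmonic as a distribution, and H\"ormander's hypoellipticity theorem---which is independent of Folland's result, since $X_j,Y_j$ and their brackets span---would make $g$ smooth at the origin, a contradiction; this upgrades your homogeneity argument to a complete existence proof with no surface computation at all. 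Second, the sign of $c''$ depends on the convention $\Delta_{\mathbb{H}_n}=-\sum_j(X_j^2+Y_j^2)$, so before asserting that the normalizing constant $c$ is \emph{positive} (as the statement claims) the sign must be tracked through the boundary term; your prescription of fixing $c$ so that $c''=-1$ is the correct way to finesse this, and the single-signedness just noted is exactly what determines whether the required $c$ is positive under the stated convention.
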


\noindent From \cite{korpois}, the fundamental solution with pole at $\beta,$ can be expressed as
\begin{equation} \label{fundamental solution expression}
	g_\beta(\alpha)=a_0\abs{C(\beta,\alpha)-Q(\beta,\alpha)}^{-n},
\end{equation}
	where $C(\beta,\alpha)=\abs{\zeta}^2+\abs{\zeta'}^2+\iota(t'-t)$ and $Q(\beta,\alpha)=2\zeta\cdot\bar{\zeta'}.$\\
The average of an integrable function $f$ on $\mathbb{H}_n$ is defined as
$$\bar{f}([\zeta,t])=\frac{1}{2\pi}\int_{0}^{2\pi}f([e^{\iota\theta}\zeta,t]) \ d\theta.$$
When $f([\zeta,t])=\bar{f}([\zeta,t])$ for $[\zeta,t]\in\mathbb{H}_n$, we say that $f$ is circular. Again from \cite{korpois}, we have
$$\bar{g}_\beta(\alpha)=a_0\abs{C(\beta,\alpha)}^{-n}F\bigg(\frac{n}{2};\frac{n}{2};n;\frac{\abs{Q(\beta,\alpha)}^2}{\abs{C(\beta,\alpha)}^2}\bigg),$$
where $F$ denote the Gaussian hypergeometric function \cite{rainville}.\\

\subsection{Main Results}
The interior homogeneous Neumann problem on $\Omega$ is about looking for a function $u$ in a suitable class $\mathfrak{C}_\Omega$ (to be defined later), that satisfies
\begin{equation} \label{mainproblem}
\left\{\begin{array}{l}
\Delta_{\mathbb{H}_n}u = 0, \ \  \text{in} \ \Omega,\\
\partial^\perp u=g, \ \ \text{on} \ \partial\Omega,\\
\end{array}\right.
\end{equation}
where $g \in C(\partial \Omega)$. The operator $\partial^\perp$ is similar to the normal operator $\frac{\partial}{\partial n_0}$ and is defined later as a remedy to deal with the characteristic points which we encounter on our way in this article. Our first task is to prove the following theorem.
\begin{theorem} \label{main theorem}
Let $g \in C(\partial \Omega)$ is such that $g(\alpha)=O\Big(\frac{1}{\zeta^k}\Big)$ as $\alpha$ nears infinity and $k\geq 1.$ Then the interior Neumann problem \eqref{mainproblem} is solvable if and only if $$\int_{\partial \Omega} g \ d\sigma = 0.$$
\end{theorem}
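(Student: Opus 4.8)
The plan is to prove the two implications of the biconditional separately, since they have quite different characters: the necessity of $\int_{\partial\Omega} g\,d\sigma = 0$ is the classical Neumann compatibility obstruction and should fall out of a Green-type identity, whereas the sufficiency requires actually exhibiting a solution in the class $\mathfrak{C}_\Omega$ and is where the real effort will go.

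For necessity, I would start from the divergence form of the Kohn-Laplacian, $\Delta_{\mathbb{H}_n} = -\operatorname{div}_0\nabla_0$, and integrate the equation $\Delta_{\mathbb{H}_n}u = 0$ against the constant function $1$. On a bounded domain the sub-Riemannian divergence theorem would immediately give $\int_{\partial\Omega}\frac{\partial}{\partial n_0}u\,d\sigma = 0$. Since $\Omega$ is the unbounded half-space, I would instead work on the truncated domains $\Omega_R = \Omega\cap\{\mathfrak{p}(\alpha)<R\}$, apply the divergence theorem there, and let $R\to\infty$. The decay hypothesis $g(\alpha)=O(\zeta^{-k})$ with $k\geq 1$, together with the decay forced on $u$ and $\nabla_0 u$ by membership in $\mathfrak{C}_\Omega$, is exactly what is needed to show that the flux through the cap $\partial\Omega_R\setminus\partial\Omega$ tends to $0$; in the limit only the flat boundary $\{t=0\}$ survives and the identity becomes $\int_{\partial\Omega} g\,d\sigma = 0$. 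Some care is needed to reconcile $\partial^\perp$ with $\frac{\partial}{\partial n_0}$ here, since the two agree away from the characteristic set $\{\zeta=0\}$ but $\partial^\perp$ is built precisely to stay integrable across it.

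For sufficiency, assuming the compatibility condition, I would represent the candidate solution as a boundary integral built from the fundamental solution $g_\beta$, most naturally by a method-of-images reflection across $\{t=0\}$ in the spirit of Kor\'anyi's half-space Green's function, giving $u(\alpha)=\int_{\partial\Omega}N(\alpha,\beta)g(\beta)\,d\sigma(\beta)$ for a suitable symmetrized kernel $N$. Harmonicity of $u$ in $\Omega$ is then immediate, since $g_\beta$ is $\Delta_{\mathbb{H}_n}$-harmonic off its pole and differentiation under the integral sign is licensed by the decay of $g$. The substantive steps are: (i) convergence of the improper boundary integral near infinity, which the $O(\zeta^{-k})$ decay secures and which also places $u$ in $\mathfrak{C}_\Omega$; and (ii) verifying the boundary relation $\partial^\perp u = g$ on $\{t=0\}$ through the trace/jump behaviour of the potential as $\alpha\to\partial\Omega$. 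The hypothesis $\int_{\partial\Omega}g\,d\sigma=0$ must re-enter at step (ii), annihilating the constant correction that the kernel's normal derivative inevitably carries so that the trace reproduces exactly $g$.

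The main obstacle I anticipate is this boundary-value verification near the characteristic point $\zeta=0$, where the horizontal normal $\frac{\partial}{\partial n_0}$ of \eqref{derivativeformula} degenerates; this is the very reason the modified operator $\partial^\perp$ is introduced, and the delicate point is to show that the normal trace of the potential, read through $\partial^\perp$, equals $g$ while staying controlled at the characteristic set. Coupled with the need to justify every passage to the limit uniformly along the unbounded boundary, it is this interplay between the singular geometry at $\zeta=0$ and the decay at infinity that carries the weight of the argument; the algebraic manipulations of $\Delta_{\mathbb{H}_n}$ and of the explicit kernel are comparatively routine.
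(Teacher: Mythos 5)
Your necessity argument is sound and is essentially the paper's: the paper simply applies its Green's formula (\cref{green formula}) with $f_1=1$ and $f_2=u$, the decay hypothesis being what makes that formula valid on the unbounded domain; your truncation over $\Omega_R$ with a vanishing flux through the cap is an explicit rendering of the same computation.

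The sufficiency direction, however, contains a genuine gap, and it is located exactly at your step (ii). Your plan is to take the method-of-images kernel $N(\alpha,\beta)=g_\beta(\alpha)+g_{\beta^*}(\alpha)$ and show that the $\partial^\perp$-trace of $u(\alpha)=\int_{\partial\Omega}N(\alpha,\beta)g(\beta)\,d\sigma(\beta)$ reproduces $g$ up to a \emph{constant} correction killed by $\int_{\partial\Omega}g\,d\sigma=0$. This is modeled on the Euclidean half-space, where the normal derivative of the fundamental solution vanishes identically for both points on the hyperplane, so the single-layer potential with density (a multiple of) $g$ itself solves the Neumann problem. In $\mathbb{H}_n$ this fails: the horizontal normal is not the Euclidean normal, and by \eqref{delperp} the restriction of $\partial^\perp\Psi(\beta,\alpha)$ to $\{t=t'=0\}$ is governed by the nontrivial kernel $K(\beta,\alpha)=2\zeta^2{\zeta'}^2-3\zeta\bar{\zeta'}\lvert\zeta\rvert^2-\zeta\bar{\zeta'}\lvert\zeta'\rvert^2$, which does not vanish; indeed \cref{values} gives $\int_{\partial\Omega}\partial^\perp\Psi(\beta,\alpha)\,d\sigma(\alpha)=-1$ for $\beta\in\partial\Omega$. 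Consequently the jump relation (\cref{delperpsingle layer}) yields for your potential a boundary trace of the form $g$ plus a genuine integral operator applied to $g$ --- a nonconstant function in general --- and the compatibility condition $\int_{\partial\Omega}g\,d\sigma=0$ has no mechanism to annihilate it. The reflection identity $\partial^\perp\bigl(\bar{g}_\beta+\bar{g}_{\beta^*}\bigr)=0$ on $\partial\Omega$, which your construction implicitly needs, holds only for the \emph{circularly averaged} kernel $\bar{g}_\beta$; this is precisely why the paper's Section 4 restricts the Neumann-function representation to circular data $f,g$, whereas \cref{main theorem} concerns arbitrary $g\in C_*(\partial\Omega)$.

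What the paper does instead, and what your outline is missing, is to keep the density unknown: one seeks the solution as a single-layer potential $V$ with density $\phi$, and the boundary condition becomes a second-kind Fredholm integral equation $\phi+\tilde{W}\phi=g$ via the jump relations. The compactness of $W,\tilde{W}$, their adjointness in the dual system $\langle C_*(\partial\Omega),C_*(\partial\Omega)\rangle$, and the nullity-one statement (\cref{nullity}; constants span $\ker(I+W)$ since $W1=-1$ by \cref{values}) let the Fredholm alternative convert solvability into orthogonality of $g$ to constants, i.e.\ $\int_{\partial\Omega}g\,d\sigma=0$; then \cref{the solution} closes the argument. This is where the compatibility condition actually enters the sufficiency proof --- as the solvability criterion of an integral equation, not as the cancellation of a constant in a trace computation --- and without this (or some substitute for it) your proposed construction cannot verify $\partial^\perp u=g$.
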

\noindent In section 4, we consider an inhomogeneous Neumann problem for $\Omega$ and obtain a Green's type function (or a Neumann function) $G,$ by means of the fundamental solution for the Kohn-Laplacian. Finally we look to establish the necessary and sufficient conditions for the solvability of the following problem
\begin{equation}
\left\{\begin{array}{l}
\Delta_{\mathbb{H}_n}u = f, \ \  \text{in} \ \Omega,\\
\frac{\partial}{\partial n_0} u=g, \ \ \text{on} \ \partial\Omega,\\
\end{array}\right.
\end{equation}
where $f$ and $g$ are circular functions.
	\section{\bf{Formulation of the problem and the uniqueness of solution}}
	 \noindent From here onwards, for the convenience of calculations, we use a slightly modified operator \textit{viz.} $\Delta_0=-\frac{1}{4}\Delta_{\mathbb{H}_n}$ and a slightly modified kernel $\Psi(\beta,\alpha)=2g_\beta(\alpha).$ Unless otherwise specified, for functions involving more than one variable, the differentiation and integration will be with respect to $\alpha$. Before we move on to formulate the main problem, it is important to look at the points where the horizontal normal vector is not defined \textit{i.e.} the points where $\nabla_0F$ vanishes. These are called the \textit{characteristic points.} For smooth $F$, the set of characteristic points form a submanifold of dimension at most $n.$\\
\noindent Let $\partial\Omega$ be given as the level set of a smooth function $\rho,$ that is, $\partial\Omega=\{\alpha\in\mathbb{H}_n: \rho(\alpha)=0\}.$ Define
\begin{align*}
	\mathfrak{C}_\Omega:=&\{f\in C^2(\Omega)\cap C(\bar{\Omega}):\lim_{\alpha\to\alpha_0}\frac{\partial}{\partial n_0}f(\alpha) \ \text{exists for all characteristic points}\\
	&\alpha_0\in\partial\Omega\}
\end{align*}
where the limit taken is consistent with the relative topology in $\bar{\Omega}.$\\
Define the operator $\partial^\perp:\mathfrak{C}_\Omega\to C(\Omega)$ as
\begin{equation*}
\partial^\perp f(\alpha_0) = \begin{cases}
\lim_{\alpha \to \alpha_0} \frac{\partial f}{\partial n_0} (\alpha), &   \text{if} \ \alpha_0 \ \text{is a  characteristic point on} \ \partial \Omega, \\
\frac{\partial f}{\partial n_0} (\alpha_0) \ \ \ \ \ \ \ \ \ , & \text{if} \ \alpha_0 \ \text{is a non-characteristic point on} \ \partial\Omega.\\
\end{cases}
\end{equation*}
The following version of Gaveau's Green's formula \cite{principede} will be useful for our analysis. Using classical arguments, it can be verified that this formula holds good for the fundamental solution and the Green's function. For further details, one can refer to \cite{garofalomutual, ruzhanskykac}.
\begin{proposition} \label{green formula}
Let $f_1, f_2 \in \mathfrak{C}_\Omega.$ Then
	$$\int_{\Omega}(f_1\Delta_0f_2-f_2\Delta_0f_1) \ d\nu=\int_{\partial \Omega}(f_1\partial^\perp f_2-f_2\partial^\perp f_1) \ d\sigma,$$
	where
	\begin{equation} \label{dsigma}
	d\sigma=\frac{\lvert\lvert\nabla_0\rho\rvert\rvert_0}{\lvert\lvert\nabla \rho\rvert\rvert}ds,
	\end{equation}
	and $ds$ is the surface element on $\partial\Omega,$ determined by the Euclidean measure.
\end{proposition}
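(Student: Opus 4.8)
The plan is to obtain \Cref{green formula} from the corresponding Green's first identity, the essential structural fact being that the horizontal frame $\{X_j,Y_j\}$ is divergence free for the Lebesgue--Haar measure $d\nu$: since
\[
\operatorname{div}X_j=\frac{\partial}{\partial x_j}(1)+\frac{\partial}{\partial t}(2y_j)=0,\qquad \operatorname{div}Y_j=\frac{\partial}{\partial y_j}(1)+\frac{\partial}{\partial t}(-2x_j)=0,
\]
the ordinary Euclidean divergence theorem applied to the vector field $hX_j$ on $\Omega$ carries no interior zeroth-order term and yields $\int_\Omega (X_j h)\,d\nu=\int_{\partial\Omega} h\,\langle X_j,N\rangle\,ds$, with $N=\nabla\rho/\lVert\nabla\rho\rVert$ the Euclidean outward unit normal and $ds$ the Euclidean surface element; the same holds for $Y_j$. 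First I would take $h=f_1\,(X_jf_2)$, expand $X_j(f_1X_jf_2)=(X_jf_1)(X_jf_2)+f_1X_j^2f_2$, add the $Y_j$ analogue, and sum over $j$. Recalling $\Delta_0=-\tfrac14\Delta_{\mathbb H_n}=\tfrac14\sum_j(X_j^2+Y_j^2)$, this produces Green's first identity, in which the interior term is the symmetric quadratic form $\int_\Omega(\nabla_0f_1,\nabla_0f_2)_0\,d\nu$ and the remainder is a boundary integral, with the multiplicative normalisation the one fixed by $\Delta_0$.

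The next step is to recognise the boundary integrand. Because $X_j=\partial_{x_j}+2y_j\partial_t$ and $Y_j=\partial_{y_j}-2x_j\partial_t$ are, as Euclidean vectors, paired with $\nabla\rho$ exactly so that $\langle X_j,N\rangle=(X_j\rho)/\lVert\nabla\rho\rVert$ and $\langle Y_j,N\rangle=(Y_j\rho)/\lVert\nabla\rho\rVert$, the boundary sum collapses to
\[
\sum_{j}\big[(X_jf)\langle X_j,N\rangle+(Y_jf)\langle Y_j,N\rangle\big]=\frac{(\nabla_0 f,\nabla_0\rho)_0}{\lVert\nabla\rho\rVert}=\frac{\partial f}{\partial n_0}\,\frac{\lVert\nabla_0\rho\rVert_0}{\lVert\nabla\rho\rVert},
\]
using \eqref{horizontal unit vector} in the form $\partial f/\partial n_0=(\nabla_0 f,\nabla_0\rho)_0/\lVert\nabla_0\rho\rVert_0$. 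By \eqref{dsigma} the last factor is precisely the density of $d\sigma$ with respect to $ds$, so the boundary contribution is $\int_{\partial\Omega} f_1\,(\partial f_2/\partial n_0)\,d\sigma$. Writing the identity once more with $f_1$ and $f_2$ interchanged and subtracting, the symmetric interior form $\int_\Omega(\nabla_0f_1,\nabla_0f_2)_0\,d\nu$ cancels, leaving the asserted Green's second identity (with $\partial/\partial n_0$ in place of $\partial^\perp$).

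Two points then remain, and the second is where the work lies. (i) At a characteristic point $\nabla_0\rho=0$, so $\partial/\partial n_0$ is undefined; however, the raw integrand $(\nabla_0 f,\nabla_0\rho)_0/\lVert\nabla\rho\rVert$ produced by the integration by parts is continuous there (indeed it tends to $0$), and since the characteristic set is a submanifold of dimension at most $n$, hence $\sigma$-null in $\partial\Omega$ of dimension $2n$, it contributes nothing; replacing $\partial/\partial n_0$ by its continuous extension $\partial^\perp$, which exists for $f\in\mathfrak C_\Omega$, changes neither side. (ii) Since $\Omega$ is the unbounded half-space, the divergence theorem is not available directly; I would run the argument on the exhausting domains $\Omega\cap B_R$ (and, when $f_1$ or $f_2$ is the fundamental solution $\Psi(\beta,\cdot)$ or the Neumann function, on $\Omega\cap B_R$ with a small homogeneous ball about the pole removed), and then let $R\to\infty$ and the inner radius tend to $0$. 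Controlling the resulting far-field boundary contribution---showing it vanishes under the decay hypotheses on the data, while the inner sphere reproduces the point evaluation dictated by $\Delta_{\mathbb H_n}g=-\delta$---is the main obstacle; this is exactly the ``classical argument'' alluded to after the statement, relying on Folland's explicit $g=c\,\mathfrak p^{-2n}$ and its homogeneity to estimate both limits.
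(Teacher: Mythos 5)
Your derivation is sound, but you should know that the paper never actually proves \cref{green formula}: it is quoted as a version of Gaveau's Green's formula \cite{principede}, with the verification for singular kernels delegated to \cite{garofalomutual,ruzhanskykac}; the closest the paper comes to your argument is in its uniqueness theorem, where the first identity \eqref{gfi} is extracted from the divergence theorem for $\mathbb{H}$-Caccioppoli sets \cite[Corollary 7.7]{franchiheisen}. Your route --- the Euclidean divergence theorem applied to $f_1(X_jf_2)X_j$ and $f_1(Y_jf_2)Y_j$, exploiting $\operatorname{div}X_j=\operatorname{div}Y_j=0$, collapsing the boundary term via $\langle X_j,N\rangle=(X_j\rho)/\lVert\nabla\rho\rVert$, then antisymmetrising --- is the elementary unwinding of that citation, and it is perfectly adequate here since $\partial\Omega=\{t=0\}$ is smooth (the Franchi--Serapioni--Serra Cassano machinery only buys generality for non-smooth finite-perimeter sets). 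Your treatment of characteristic points is in fact more careful than anything in the paper: the raw boundary integrand is $(\partial f/\partial n_0)\,\lVert\nabla_0\rho\rVert_0/\lVert\nabla\rho\rVert$, the density in \eqref{dsigma} vanishes where $\nabla_0\rho=0$, and the characteristic set is $\sigma$-null, so passing to $\partial^\perp$ costs nothing. Two caveats. First, the normalisation you wave at (\enquote{fixed by $\Delta_0$}) hides a genuine factor: with $\Delta_0=\tfrac14\sum_j(X_j^2+Y_j^2)$ your integration by parts produces the interior term $(\nabla_0f_1,\nabla_0f_2)_0+4f_1\Delta_0f_2$, so the proposition balances only if $d\sigma=\tfrac14\lVert\nabla_0\rho\rVert_0\lVert\nabla\rho\rVert^{-1}ds$ --- consistent with the value $d\sigma=\tfrac{\abs{\zeta}}{2}\,ds$ used in \cref{singlelayer} (via \cite{korpois}), but off by a factor $4$ from \eqref{dsigma} as literally printed (which for $\rho=t$ gives $2\abs{\zeta}\,ds$); this inconsistency is internal to the paper, and you should surface it explicitly rather than hedge. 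Second, your step (ii) is flagged but not executed, and it genuinely cannot be closed for arbitrary $f_1,f_2\in\mathfrak{C}_\Omega$: that class encodes no decay, and the far-field contribution over $\partial B_R\cap\Omega$ (surface measure of order $R^{2n+1}$ against $\Psi=O(R^{-2n})$ and $\nabla_0\Psi=O(R^{-2n-1})$) does not vanish in general, so the identity holds only under the implicit decay and integrability hypotheses the paper later imposes through $C_*(\partial\Omega)$ and the boundedness of $f$; given that the paper offers no proof at all, your explicit identification of this as the real obstruction is the correct and honest reading, but a complete proof would carry out the $R\to\infty$ and inner-sphere limits under those stated hypotheses.
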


	\begin{theorem}
		A solution of the problem \eqref{mainproblem}, if exists, is unique up to additive constants.
	\end{theorem}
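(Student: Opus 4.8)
The plan is to argue by the energy method, reducing the statement to the claim that any solution of the \emph{homogeneous} problem (data $g\equiv 0$) must be constant. Suppose $u_1,u_2\in\mathfrak{C}_\Omega$ both solve \eqref{mainproblem} for the same $g$, and set $w=u_1-u_2$. Since $\Delta_0$ and $\partial^\perp$ are linear, $w\in\mathfrak{C}_\Omega$ and satisfies $\Delta_0 w=0$ in $\Omega$ together with $\partial^\perp w=0$ on all of $\partial\Omega$ (at the non-characteristic points $\partial^\perp w=\frac{\partial w}{\partial n_0}=0$, and at the characteristic points it is the prescribed limit, hence also $0$). The goal is then to prove that $w$ is constant.

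First I would set up a first (energy) Green identity, which is the tool actually needed, since inserting $f_1=f_2=w$ into the symmetric identity of \Cref{green formula} only produces the trivial equality $0=0$. Applying the horizontal divergence theorem to the horizontal field $w\,\nabla_0 w$ yields
$$\int_\Omega \lVert \nabla_0 w\rVert_0^2\,d\nu + \int_\Omega w\,\Delta_0 w\,d\nu = \int_{\partial\Omega} w\,\partial^\perp w\,d\sigma,$$
whose antisymmetrization in $(f_1,f_2)$ reproduces precisely \Cref{green formula}. Both terms involving $w\,\Delta_0 w$ and $w\,\partial^\perp w$ vanish by the homogeneous equation and the homogeneous boundary condition, so $\int_\Omega \lVert\nabla_0 w\rVert_0^2\,d\nu=0$. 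As the integrand is continuous and nonnegative, this forces $\nabla_0 w\equiv 0$ in $\Omega$, i.e. $X_j w=Y_j w=0$ for every $j$.

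It remains to upgrade the vanishing of the \emph{horizontal} gradient to constancy of $w$; this is the genuinely sub-Riemannian point, because $\nabla_0$ does not directly control the $T$-direction. Here I would use the bracket-generating (H\"{o}rmander) structure of $\mathbb{H}_n$: a direct computation gives $[X_j,Y_j]=-4T$, so, using $w\in C^2(\Omega)$,
$$Tw=-\tfrac14\bigl(X_j(Y_j w)-Y_j(X_j w)\bigr)=0.$$
Combining $Tw=\partial_t w=0$ with $X_j w=\partial_{x_j}w+2y_j\,Tw=\partial_{x_j}w$ and $Y_j w=\partial_{y_j}w-2x_j\,Tw=\partial_{y_j}w$ shows that every Euclidean partial derivative of $w$ vanishes. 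Since the half-space $\Omega$ is connected, $w$ is constant, which is exactly uniqueness up to additive constants.

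The step I expect to demand the most care is not the algebra above but the justification of the divergence/Green identity on the \emph{unbounded} domain $\Omega$. I would make it rigorous through an exhaustion $\Omega_R=\Omega\cap\{\mathfrak{p}(\alpha)<R\}$, applying the identity on each $\Omega_R$ and then letting $R\to\infty$; the essential task is to show that the boundary contribution over the ``cap at infinity'' $\partial\Omega_R\setminus\partial\Omega$ tends to $0$, which is where the decay encoded in the admissible class must be invoked. This is the only place where the unboundedness of $\Omega$, as opposed to the classical bounded Neumann problem, genuinely enters the uniqueness argument.
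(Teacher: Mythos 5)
Your proof is correct and follows the same route as the paper: form the difference $w=u_1-u_2$, apply Green's first identity (obtained from the horizontal divergence theorem with the field $w\,\nabla_0 w$) to conclude $\int_\Omega \lVert\nabla_0 w\rVert_0^2\,d\nu=0$, hence $\nabla_0 w\equiv 0$, and then upgrade this to constancy. You deviate from the paper only in the two peripheral steps, and in both cases instructively. First, where the paper disposes of the final step by citing \cite[Lemma 4.3]{sir}, you prove it directly from the bracket-generating structure: $[X_j,Y_j]=-4T$ gives $Tw=0$ once $X_jw=Y_jw=0$, whence all Euclidean partials vanish and connectedness of the half-space finishes the argument; this is an elementary, self-contained replacement for the citation, and your computation is correct (note it uses $w\in C^2(\Omega)$, which the class $\mathfrak{C}_\Omega$ provides). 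Second, you explicitly flag that the divergence/Green identity needs justification on the unbounded domain $\Omega$ and propose an exhaustion $\Omega_R$ with a vanishing cap contribution at infinity; the paper instead invokes the divergence theorem for $\mathbb{H}$-Caccioppoli sets from \cite[Corollary 7.7]{franchiheisen} without addressing convergence of the improper integrals, so your remark identifies a point the paper glosses over rather than resolves. Be aware, though, that your exhaustion step is only sketched: the admissible class $\mathfrak{C}_\Omega$ imposes no decay on $u$ itself (only the boundary datum $g$ decays), so making the cap term vanish would require an additional decay or finite-energy hypothesis on solutions --- a gap your proposal shares with, but at least acknowledges more honestly than, the paper's own proof.
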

	\begin{proof}
	As $\Omega$ is a $\mathbb{H}-$Caccioppoli set \cite{franchiheisen}, hence using the substitution $v\nabla_0u$ in the divergence theorem \cite[Corollary 7.7]{franchiheisen}, we get the following Green's first identity
	\begin{equation} \label{gfi}
		\int_{\partial\Omega} v\partial^\perp u \ d\sigma=\int_\Omega (v\Delta_0u-\nabla_0v\cdot \nabla_0u) \ d\nu,
	\end{equation}
		where $u,v \in C^1(\bar{\Omega}).$ Now for any two solutions $u_1, u_2$ of \cref{mainproblem}, the difference $u=u_1-u_2$ is harmonic in $\Omega$ and continuous up to boundary. Also $\partial^\perp u=0.$ Using \cref{gfi},
		$$\int_\Omega \abs{\nabla_0u}^2 \ d\nu=\int_{\partial\Omega} u\partial^\perp u \ d\sigma - \int_\Omega u(\Delta_0u) \ d\nu = 0,$$
		which means $\nabla_0u=0.$ Using \cite[Lemma 4.3]{sir}, it can be easily proved that $u$ is a constant.
	\end{proof}
\section{\bf{The surface potentials and the existence of solution}}
\noindent For $k\geq1,$ define $C_*(\partial\Omega)=\{\psi\in C(\partial\Omega):\psi(\alpha)=O\Big(\frac{1}{\zeta^k}\Big) \ \text{as} \ \zeta\to\infty\}.$
\begin{defn} \label{potentials}
	For $\psi\in C_*(\partial\Omega)$ and $\beta\in\mathbb{H}_n\setminus\partial\Omega,$ define
	$$V(\beta):=\int_{\partial \Omega}\psi(\alpha)\Psi(\beta,\alpha) \ d\sigma(\alpha) \ \text{and} \ \tilde{V}(\beta):=\int_{\partial \Omega}\psi(\alpha)\partial^\perp\Psi(\beta,\alpha) \ d\sigma(\alpha).$$
	Both $V$ and $\tilde{V}$ are $\Delta_0-$harmonic and respectively called the single- and double-layer potentials with density $\psi.$
\end{defn}
\begin{lemma} \label{singlelayer}
	For $\beta\in\partial\Omega$ and $\psi\in C_*(\partial\Omega),$ the integral $V(\beta)=\int_{\partial\Omega} \psi(\alpha)\Psi(\beta,\alpha) \ d\sigma(\alpha)$ exists and $V$ is continuous throughout $\mathbb{H}_n.$
\end{lemma}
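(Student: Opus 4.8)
The plan is to reduce the statement to an explicit integral over $\partial\Omega\cong\mathbb{C}^n\cong\mathbb{R}^{2n}$ and to control the single-layer kernel near its only singularity and near infinity. First I would record the boundary data: taking $\rho(\zeta,t)=t$ one has $\norm{\nabla\rho}=1$ and $\nabla_0\rho=\sum_{j}(2y_jX_j-2x_jY_j)$, so $\norm{\nabla_0\rho}_0=2\abs{\zeta}$ and hence $d\sigma=2\abs{\zeta}\,ds$, where $ds$ is Lebesgue measure on the hyperplane $\{t=0\}$. For $\alpha=(\zeta,0)\in\partial\Omega$ and a general $\beta=(\zeta',t')$ a direct computation gives
\[
C(\beta,\alpha)-Q(\beta,\alpha)=\abs{\zeta-\zeta'}^2+\iota\big(t'-2\Im(\zeta\cdot\bar{\zeta'})\big),
\]
so that $\abs{C-Q}^2=\abs{\zeta-\zeta'}^4+\big(t'-2\Im(\zeta\cdot\bar{\zeta'})\big)^2$ and $\Psi(\beta,\alpha)=2a_0\abs{C-Q}^{-n}$. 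When $\beta\in\partial\Omega$ (so $t'=0$) this vanishes only at $\zeta=\zeta'$; away from that point $\Psi(\beta,\cdot)$ is smooth, so the only issues are integrability at $\zeta=\zeta'$ and at infinity.

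For the local integrability near $\zeta=\zeta'$ I would set $w=\zeta-\zeta'$ and use $\ell(w):=\Im(w\cdot\bar{\zeta'})$, a real linear functional of operator norm $\abs{\zeta'}$, so that $\abs{C-Q}^2=\abs{w}^4+4\ell(w)^2$. Two cases arise according to whether $\beta$ is characteristic. If $\zeta'\neq0$ (non-characteristic), splitting $\mathbb{R}^{2n}$ into $\ker\ell$ and its one-dimensional complement and using the anisotropic dilation $u\mapsto\lambda u$ on $\ker\ell$, $s\mapsto\lambda^2 s$ on the complement, shows that near $w=0$ the quantity $\abs{C-Q}$ is comparable to the weight-$2$ quasi-norm $\big(\abs{u}^4+4\abs{\zeta'}^2 s^2\big)^{1/2}$ for the homogeneous dimension $(2n-1)+2=2n+1$; since the kernel has weight $-2n$ and $2n<2n+1$, it is locally integrable against $d\sigma$ (the factor $2\abs{\zeta}\approx2\abs{\zeta'}$ being bounded and nonzero). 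If $\zeta'=0$ (characteristic) then $\ell\equiv0$, $\abs{C-Q}=\abs{\zeta}^2$ and the integrand is $O(\abs{\zeta}^{-(2n-1)})$, integrable on $\mathbb{R}^{2n}$ because $2n-1<2n$. For the tail, as $\abs{\zeta}\to\infty$ one has $\abs{C-Q}\sim\abs{\zeta}^2$, hence $\Psi(\beta,\alpha)=O(\abs{\zeta}^{-2n})$; combined with $\abs{\psi(\alpha)}=O(\abs{\zeta}^{-k})$ and $d\sigma=2\abs{\zeta}\,ds$ the integrand is $O(\abs{\zeta}^{-(2n-1)-k})$, whose radial tail in polar coordinates is $\int^\infty r^{-k}\,dr$, finite under the decay hypothesis on $\psi$. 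This proves $V(\beta)$ exists for $\beta\in\partial\Omega$.

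For continuity, away from $\partial\Omega$ the kernel $\Psi(\beta,\alpha)$ is jointly continuous and, by the same two estimates (now with $t'\neq0$ removing the singularity), is dominated uniformly for $\beta$ in any compact set disjoint from $\partial\Omega$; dominated convergence then gives continuity of $V$ there. The substantive point is continuity up to and across $\partial\Omega$: given $\beta_0$ with projection $\zeta'_0$, I would write $V(\beta)-V(\beta_0)=\int_{\partial\Omega}\psi\,[\Psi(\beta,\cdot)-\Psi(\beta_0,\cdot)]\,d\sigma$ and split $\partial\Omega$ into a small coordinate ball $B_\varepsilon$ around $\zeta'_0$ and its complement. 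On $\partial\Omega\setminus B_\varepsilon$ the integrand converges uniformly as $\beta\to\beta_0$, so that part tends to $0$; on $B_\varepsilon$ I would bound the two terms separately and invoke a uniform weak-singularity estimate, namely $\sup_{\beta\in U}\int_{B_\varepsilon}\abs{\Psi(\beta,\alpha)}\,d\sigma(\alpha)\to0$ as $\varepsilon\to0$, for $U$ a neighborhood of $\beta_0$ in $\mathbb{H}_n$.

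The main obstacle is exactly this uniform estimate, because the order of the singularity changes as the base point passes through a characteristic point: the ``good'' direction encoded by $\ell$ degenerates as $\zeta'\to0$, and one must verify that the bound of the second paragraph holds uniformly as $\beta$ approaches $\partial\Omega$ with $\zeta'\to\zeta'_0$ simultaneously, including the case $\zeta'_0=0$. I expect this to follow from the explicit form $\abs{C-Q}^2=\abs{\zeta-\zeta'}^4+(t'-2\Im(\zeta\cdot\bar{\zeta'}))^2$ by an anisotropic change of variables interpolating between the two scaling regimes above, the extra factor $\abs{\zeta}$ in $d\sigma$ absorbing precisely the one power needed at the characteristic point; a secondary delicate point is the borderline decay $k=1$ in the tail, where the estimate is sharp and one must exploit the precise leading asymptotics $g_\beta\sim a_0\abs{\zeta}^{-2n}$. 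Once the uniform bound is in hand the $\varepsilon$-splitting closes and shows that $V$ is continuous across $\partial\Omega$ (the single layer, unlike the double layer, carrying no jump), completing the proof.
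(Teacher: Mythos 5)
Your route is essentially the paper's own, carried out with more care. The paper likewise passes to the explicit boundary kernel via $\mathfrak{p}(\beta^{-1}\alpha)=\abs{C(\beta,\alpha)-Q(\beta,\alpha)}^{1/2}$ and $d\sigma=c\,\abs{\zeta}\,ds$, splits $\partial\Omega$ into the bounded set $\Omega_\beta(\epsilon)=\{\alpha\in\partial\Omega:\mathfrak{p}(\beta^{-1}\alpha)\leq\epsilon\}$ and its complement, disposes of the singularity with a one-line appeal to polar coordinates in a Kor\'anyi-like ball, and invokes the decay of $\psi$ for the tail; your anisotropic-dilation analysis with the characteristic/non-characteristic dichotomy and the homogeneous dimension $2n+1$ of the hyperplane is precisely the detail hiding behind that one line. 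For continuity the paper appeals to ``uniform continuity of convolutions of two integrable functions,'' which is loose as stated --- $\{t=0\}$ is not a subgroup of $\mathbb{H}_n$ and $d\sigma$ is not translation-invariant --- so your $\varepsilon$-splitting with a uniform weak-singularity estimate is the honest substitute. The uniformity across characteristic points that you flag as the main obstacle does close, and along the lines you expect: with $w=\zeta-\zeta'$, $u\in\ker\ell$ and $v$ the complementary coordinate, one has $\abs{C-Q}\gtrsim\abs{u}^2+v^2+\abs{\zeta'}\abs{v}$ and $\abs{\zeta}\lesssim\abs{w}+\abs{\zeta'}$; the dangerous term carrying the factor $\abs{\zeta'}$ contributes $O\big((\abs{\zeta'}\varepsilon)^{1/2}\big)$ after integrating out $u$, which is uniformly small in $\beta$, the weight $\abs{\zeta}$ and the $\ell$-direction compensating each other exactly as you predicted. (Minor point: from the paper's \eqref{dsigma} one indeed gets $d\sigma=2\abs{\zeta}\,ds$ as you computed; the factor $\abs{\zeta}/2$ quoted in the paper comes from Kor\'anyi's normalization of the vector fields, and the constant is immaterial to the lemma.)

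The one genuine defect is the borderline $k=1$, and you should not expect to repair it by sharper asymptotics. Your tail estimate gives $\int^\infty r^{-k}\,dr$, which diverges at $k=1$; since the integrand is nonnegative, no cancellation is available, and the failure is real: for $\psi(\zeta,0)=(1+\abs{\zeta})^{-1}\in C_*(\partial\Omega)$ with $k=1$, one has $\abs{C-Q}\asymp\abs{\zeta}^2$ for $\abs{\zeta}$ large and $\beta$ fixed, so $\psi\,\Psi\,d\sigma\asymp\abs{\zeta}^{-2n}\,ds$ at infinity and $V(\beta)=+\infty$ for every $\beta$. Thus the lemma as stated fails at $k=1$ and requires $k>1$ (or a hypothesis such as $\int_{\partial\Omega}\abs{\psi(\zeta,0)}\,\abs{\zeta}^{1-2n}\,ds<\infty$); your suggestion to exploit the precise leading behaviour $g_\beta\sim a_0\abs{\zeta}^{-2n}$ cannot rescue it, since that is exactly the asymptotic producing the logarithmic divergence. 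Note, however, that the paper's proof contains the identical gap --- it simply asserts ``As $\psi\in C_*(\partial\Omega)$, the integral exists'' without checking the borderline case --- so on this point your proposal reproduces, and in fact exposes, a defect of the paper rather than introducing one; the correct fix is to strengthen the hypothesis, not to refine the estimate.
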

\begin{proof}
We have $\Psi(\beta,\alpha)=2c \ {\mathfrak{p}(\beta^{-1}\alpha)}^{-2n}.$ For each $\beta\in\partial\Omega$ and some $\epsilon>0$, let $\Omega_\beta(\epsilon)=\{\alpha\in\partial\Omega:\mathfrak{p}(\beta^{-1}\alpha)\leq \epsilon\}.$ As $\Omega_{\beta}(\epsilon)$ is bounded and $\psi\in L^{\infty}(\partial\Omega)$, we have
$$\abs{\int_{\Omega_\beta(\epsilon)}\psi(\alpha) \Psi(\beta,\alpha) \ d\sigma}\leq 2c \ \sup_{\alpha\in\Omega_{\beta}(\epsilon)}\abs{\psi(\alpha)}\int_{\Omega_\beta(\epsilon)}\mathfrak{p}(\beta^{-1}\alpha)^{-2n} \ d\sigma.$$
As $\Psi(\beta,\alpha)$ admits a pole at $\alpha=\beta$, hence by taking a sufficiently small Kor\'{a}nyi-like ball around $\beta$ and using the polar coordinates for $\mathbb{H}_n$ \cite{korhornor}, it can be easily verified that the integral exists on $\Omega_\beta(\epsilon).$\\
Using \cref{dsigma} and \cite[Eq. (3.7)]{korpois}, we obtain
	$$d\sigma=\frac{\abs{\zeta}}{2}ds.$$ 
The following expression for the gauge norm follows from \cref{fundamental solution expression}.
	\begin{equation} \label{norm formula}
		\mathfrak{p}(\beta^{-1}\alpha)=\abs{C(\beta,\alpha)-Q(\beta,\alpha)}^{\frac{1}{2}}.
	\end{equation}
	Set $\Omega'=\partial\Omega\setminus\Omega_\beta(\epsilon)$ and consider
	$$\int_{\Omega'}\psi(\alpha)\Psi(\beta,\alpha) \ d\sigma=c\int_{\Omega'}\psi(\alpha) \ \abs{\abs{\zeta}^2+\abs{\zeta'}^2+\iota(t'-t)-2\zeta\cdot\bar{\zeta'}}^{-n}\abs{\zeta} \ ds.$$
On the boundary, $t=0$ and therefore,
	$$\int_{\Omega'}\psi(\alpha)\Psi(\beta,\alpha) \ d\sigma=c\int_{\Omega'}\frac{\psi(\alpha) \  \abs{\zeta}}{\abs{\abs{\zeta}^2+\abs{\zeta'}^2-2\zeta\cdot\bar{\zeta'}+\iota t'}^{n}} \ ds.$$
As $\psi\in C_*(\partial\Omega),$ the integral exists. It is to note that $ds$ is a Radon measure and hence the uniform continuity of convolutions of two integrable functions can be established through a routine proof. As a particular case, $V$ is continuous. 
\end{proof}

\begin{lemma} \label{values}
	The kernel $\Psi$ satisfies the following.
	$$\int_{\partial\Omega} \partial^\perp\Psi(\beta,\alpha) \ d\sigma(\alpha)=  \begin{cases}
	-2, & \ \beta\in\Omega \\
	-1, & \ \beta\in\partial\Omega \\
	\ \ 		 0, &  \ \beta\in{\mathbb{H}_n}\setminus\bar{\Omega}.
	\end{cases}$$
\end{lemma}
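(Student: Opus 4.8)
The plan is to read this as a Gauss-type (solid angle) formula for the double layer of the fundamental solution, and to obtain it by pairing $\Psi(\beta,\cdot)$ against the constant function $1$ in the divergence form of Green's identity. Setting $v\equiv 1$ and $u=\Psi(\beta,\cdot)$ in \eqref{gfi} (equivalently taking $f_2\equiv 1$ in Proposition \ref{green formula}) collapses the identity to $\int_{\partial\Omega}\partial^\perp\Psi\,d\sigma=\int_{\Omega}\Delta_0\Psi\,d\nu$, so the statement reduces to locating the distributional mass of $\Delta_0\Psi(\beta,\cdot)$ relative to $\Omega$. Since $\Psi=2g_\beta$ and $\Delta_{\mathbb{H}_n}g_\beta=-\delta_\beta$, the kernel is $\Delta_0$-harmonic away from $\alpha=\beta$ and carries a single normalized unit of charge at $\beta$; the three cases of the lemma are exactly the three positions of $\beta$ (interior, boundary, exterior) relative to $\Omega$.

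For the exterior case $\beta\in\mathbb{H}_n\setminus\bar{\Omega}$ the kernel is smooth and $\Delta_0$-harmonic on all of $\bar{\Omega}$, so the divergence identity gives $0$ provided the flux through the part of the boundary near infinity vanishes. I would establish this by first running the argument on a truncated domain $\Omega\cap B_R$ (with $B_R$ a large Kor\'anyi ball) and letting $R\to\infty$, using the homogeneity decay $\nabla_0\Psi=O\big(\mathfrak{p}(\beta^{-1}\alpha)^{-2n-1}\big)$ against the polynomial growth of the surface measure to kill the far contribution; this is the standard device for the unbounded half-space flagged in the introduction. For the interior case $\beta\in\Omega$ I would excise a small Kor\'anyi ball $B_\epsilon(\beta)\subset\Omega$, apply the divergence identity on $\Omega\setminus B_\epsilon(\beta)$ (where $\Psi$ is harmonic), discard the contribution at infinity as above, and reduce to the small-sphere flux $\int_{\partial B_\epsilon(\beta)}\partial^\perp\Psi\,d\sigma$. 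Its limit as $\epsilon\to 0$ is fixed by the normalization $\Delta_{\mathbb{H}_n}\Psi=-2\delta_\beta$ of the fundamental solution: either evaluate $\int_{B_\epsilon}\Delta_0\Psi\,d\nu$ directly, or compute the flux explicitly using \eqref{derivativeformula}, \eqref{norm formula}, the relation $d\sigma=\tfrac{|\zeta|}{2}\,ds$ and Kor\'anyi polar coordinates. Tracking the orientation of the excised sphere (whose outward normal relative to $\Omega\setminus B_\epsilon(\beta)$ points toward $\beta$) then yields the value $-2$.

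The boundary case $\beta\in\partial\Omega$ is where I expect the real work. Here the pole sits on $\partial\Omega$, so the boundary integral $\int_{\partial\Omega}\partial^\perp\Psi\,d\sigma$ is itself singular at $\beta$ and must first be shown to converge as an improper integral; I would do this with estimates near $\beta$ of the type used in Lemma \ref{singlelayer}, together with the polar coordinates. I would then excise a small Kor\'anyi half-ball $B_\epsilon(\beta)\cap\Omega$ and show that the spherical cap contributes exactly one half of the full interior flux, so that the value is $-1$; it is the symmetry of the half-space $\{t>0\}$ and of the kernel about $\partial\Omega$ that produces the clean factor $\tfrac12$. The delicate point is the interaction with the characteristic point of $\partial\Omega$ (where $|\zeta|=0$ and $\partial^\perp$ is defined only through the limiting procedure built into $\mathfrak{C}_\Omega$): one must check that passing to $\partial^\perp$ there disturbs neither the convergence of the boundary integral nor the value of the half-sphere flux. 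This uniform control near the characteristic point, rather than any single explicit integral, is the main obstacle.
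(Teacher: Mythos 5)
Your proposal follows essentially the same route as the paper: apply Proposition \ref{green formula} with $f_2\equiv 1$, excise a small gauge ball $\Omega_\beta(\epsilon)$ around the pole, use the half-space symmetry to reduce the boundary case to one half of the full small-sphere flux, and fix that flux by the normalization of the fundamental solution (the paper simply cites Kor\'anyi \cite{korpois}, Eq.\ (1.15), for this value, which is one of the two options you give). Your additional attention to the truncation at infinity and to the characteristic point is a sound elaboration of the same argument rather than a different method.
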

\begin{proof}
	For $\beta\in\partial\Omega$ and $\Omega_\beta(\epsilon)$ as defined in \cref{singlelayer}, using \cref{green formula}  on $\Omega\setminus\Omega_\beta(\epsilon)$ and substituting $f_1=\Psi(\cdot,\beta), f_2=1$, we get
	$$\int_{\partial{(\Omega\setminus\Omega_\beta(\epsilon))}} \partial^\perp\Psi(\beta,\alpha) \ d\sigma(\alpha)=0,$$
	i.e.
	\begin{align*}
	\int_{\partial\Omega\setminus\Omega_\beta(\epsilon)} \partial^\perp\Psi(\beta,\alpha) \ d\sigma(\alpha)&=-\lim_{\epsilon\to0}  \int_{\Omega\cap\partial\Omega_\beta(\epsilon)} \partial^\perp\Psi(\beta,\alpha) \ d\sigma(\alpha)\\
	&=-\frac{1}{2}\lim_{\epsilon\to0}\int_{\partial\Omega_\beta(\epsilon)}\partial^\perp\Psi(\beta,\alpha) d\sigma(\alpha).
	\end{align*}
	From \cite[Eq. (1.15)]{korpois}, we get $\int_{\partial\Omega}\partial^\perp\Psi(\beta,\alpha) \ d\sigma(\alpha)=-1.$ Further using appropriate substitutions in \cref{green formula}, the results can be proved for $\beta\in\Omega$ and $\mathbb{H}_n\setminus\Omega.$
\end{proof}

\begin{corollary} \label{integralcontinuous}
	For $\psi\in C_*(\partial\Omega)$ and $\beta\in\partial\Omega,$
	$$\int_{\partial\Omega} \psi(\alpha)\partial^\perp\Psi(\beta,\alpha) \ d\sigma(\alpha)<\infty.$$
\end{corollary}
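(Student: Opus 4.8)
The plan is to reduce the finiteness of $\int_{\partial\Omega}\psi(\alpha)\partial^\perp\Psi(\beta,\alpha)\,d\sigma(\alpha)$ to the two facts already at our disposal: the evaluation $\int_{\partial\Omega}\partial^\perp\Psi(\beta,\alpha)\,d\sigma(\alpha)=-1$ from \cref{values}, and the splitting technique used in \cref{singlelayer}. Fix $\beta\in\partial\Omega$, choose $\epsilon>0$, and write $\partial\Omega=\Omega_\beta(\epsilon)\cup\Omega'$ with $\Omega'=\partial\Omega\setminus\Omega_\beta(\epsilon)$, so that the pole $\alpha=\beta$ lies only in $\Omega_\beta(\epsilon)$. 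The two regions are then treated by entirely different mechanisms.

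First I would dispose of the far part $\Omega'$. There $\partial^\perp\Psi(\beta,\cdot)$ is smooth and bounded, which one sees using \eqref{norm formula} and the identity $d\sigma=\frac{\abs{\zeta}}{2}ds$ exactly as in \cref{singlelayer}, while $\psi\in C_*(\partial\Omega)$ decays like $\abs{\zeta}^{-k}$ with $k\geq1$. Tracking the homogeneity of the derivative kernel at infinity---it carries one more order of decay than $\Psi$ itself---one checks that $\psi(\alpha)\partial^\perp\Psi(\beta,\alpha)$ is absolutely integrable over the unbounded set $\Omega'$, precisely as the decay hypothesis on $\psi$ secured convergence of the far integral in \cref{singlelayer}.

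The delicate part is $\Omega_\beta(\epsilon)$, where I would introduce the subtraction
$$\int_{\Omega_\beta(\epsilon)}\psi(\alpha)\partial^\perp\Psi(\beta,\alpha)\,d\sigma=\int_{\Omega_\beta(\epsilon)}\big(\psi(\alpha)-\psi(\beta)\big)\partial^\perp\Psi(\beta,\alpha)\,d\sigma+\psi(\beta)\int_{\Omega_\beta(\epsilon)}\partial^\perp\Psi(\beta,\alpha)\,d\sigma.$$
The second summand is finite: by \cref{values} the integral of $\partial^\perp\Psi$ over all of $\partial\Omega$ equals $-1$, and its $\Omega'$-part was just shown finite, so the $\Omega_\beta(\epsilon)$-part is a finite number merely rescaled by the constant $\psi(\beta)$. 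For the first summand I would exploit the continuity of $\psi$ at $\beta$: given $\eta>0$ one can shrink $\epsilon$ so that $\abs{\psi(\alpha)-\psi(\beta)}<\eta$ throughout $\Omega_\beta(\epsilon)$, which suppresses the factor multiplying the singular kernel.

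The hard part will be that $\partial^\perp\Psi(\beta,\cdot)$ is \emph{not} absolutely integrable near the pole: the normal derivative raises the singularity of $\Psi$ by one homogeneous order, so the convergence recorded in \cref{values} rests on angular cancellation rather than on an integrable majorant. Hence one cannot simply bound $\abs{\psi-\psi(\beta)}\,\abs{\partial^\perp\Psi}$ and integrate. Instead I would keep the signed kernel intact, realize the difference term as the limit over annular shells $\Omega_\beta(\epsilon)\setminus\Omega_\beta(\delta)$ as $\delta\to0$, and control it through the smallness of $\abs{\psi-\psi(\beta)}$ together with the uniform boundedness of the shell integrals of $\partial^\perp\Psi$ inherited from \cref{values}. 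A secondary point requiring care is the case where $\beta$ is a characteristic point, namely $\zeta'=0$, at which $\partial^\perp\Psi$ is defined only through the boundary limit built into $\mathfrak{C}_\Omega$; there I would run the same splitting after passing to the limiting value guaranteed by the definition of $\partial^\perp$.
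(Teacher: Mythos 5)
Your decomposition is more careful than what the paper itself provides: the paper gives this statement no proof at all, presenting it as an immediate consequence of \cref{values} together with the boundedness and decay of $\psi$, silently treating the kernel as if it were absolutely integrable. Your diagnosis that it is not is correct and is in fact the real content of the corollary: $\partial^\perp$ raises the singularity of $\Psi$ by one homogeneous order, so $\abs{\partial^\perp\Psi(\beta,\alpha)}\lesssim \mathfrak{p}(\beta^{-1}\alpha)^{-(2n+1)}$ near the pole, while gauge balls of radius $r$ in $\partial\Omega$ carry $d\sigma$-measure of order $r^{2n+1}$; this is exactly log-borderline, and a direct computation of $\partial^\perp\Psi$ on $\{t=t'=0\}$ (it is a constant multiple of $\abs{\zeta}^{-1}\abs{w}^{-(n+2)}\Im(\bar{w}A)$ with $w=\abs{\zeta-\zeta'}^2+2\iota\,\Im(\zeta\cdot\overline{\zeta-\zeta'})$ and $A=\zeta\cdot\overline{\zeta-\zeta'}$) confirms that $\int\abs{\partial^\perp\Psi}\,d\sigma$ diverges logarithmically at the pole, the convergence in \cref{values} resting on the oddness of $\Im(\bar{w}A)$ in the symmetric excision used there. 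Your treatment of $\Omega'$ and of the term $\psi(\beta)\int_{\Omega_\beta(\epsilon)}\partial^\perp\Psi\,d\sigma$ is sound.

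The genuine gap is in your closing mechanism for the difference term. Uniform boundedness of the \emph{signed} shell integrals of $\partial^\perp\Psi$, combined with $\sup_{\Omega_\beta(\epsilon)}\abs{\psi-\psi(\beta)}<\eta$, does not control $\int(\psi(\alpha)-\psi(\beta))\,\partial^\perp\Psi(\beta,\alpha)\,d\sigma$ over shells: once the kernel is multiplied by an arbitrary continuous factor, the only majorant available is the \emph{absolute} shell integral, which by your own observation is of unit size on every dyadic shell $\{2^{-j-1}\epsilon\leq\mathfrak{p}(\beta^{-1}\alpha)\leq 2^{-j}\epsilon\}$. A density whose sign near $\beta$ is correlated with that of the kernel and whose modulus of continuity is $\omega(r)=1/\log(e/r)$ (continuous but not Dini) contributes $\gtrsim\omega(2^{-j})\sim 1/j$ per shell, and $\sum_j 1/j$ diverges, so the Cauchy property of $\int_{\partial\Omega\setminus\Omega_\beta(\delta)}(\psi-\psi(\beta))\,\partial^\perp\Psi\,d\sigma$ as $\delta\to 0$ cannot be extracted from smallness plus bounded signed shell integrals. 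To actually close the argument one needs the subtracted term to converge \emph{absolutely}, i.e.\ a Dini condition $\int_0\omega(r)\,r^{-1}\,dr<\infty$ on $\psi$ (for instance H\"older continuity), or else the integral in the corollary must be read as the principal value defined by the symmetric gauge excision of \cref{values} --- which is in effect how the paper uses it downstream in \cref{ucontinuous}. Your caution about the characteristic point $\zeta'=0$ is apt but secondary; the quantitative obstruction above is the step that fails.
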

\noindent We now proceed to probe the double-layer potential $\tilde{V}$ for its continuity around the boundary $\partial\Omega.$
For that, we consider a neighbourhood $N_{h_0}(\partial\Omega)$ of $\partial\Omega$ for a sufficiently small $h_0>0$ such that
$$N_{h_0}(\partial\Omega)=\{\gamma+h\hat{\gamma}: \gamma\in\partial\Omega \ \text{and} \ h\in[-h_0,h_0] \}.$$

\begin{lemma} \label{ucontinuous}
	Define $$u(\beta)=\int_{\partial \Omega} \{\psi(\alpha)-\psi(\gamma)\} \partial^\perp\Psi(\beta,\alpha) \ d\sigma(\alpha)$$
	for $\beta\in N_{h_0}(\partial\Omega)\setminus\partial\Omega.$ For $\gamma\in\partial\Omega,$ as $h\to 0^+,$ we have $u(\gamma+h\hat{\gamma})\to u(\gamma)$ uniformly over compact neighbourhoods of $\gamma$ in $N_{h_0}(\partial\Omega).$
\end{lemma}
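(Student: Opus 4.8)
The plan is to reduce everything to a careful analysis of the limit $h\to0^{+}$, exploiting that for $\beta\notin\partial\Omega$ the pole of $\Psi(\beta,\cdot)$ lies off the surface, so $\partial^\perp\Psi(\beta,\cdot)$ is a bounded continuous kernel on $\partial\Omega$ and $u(\beta)$ is unambiguously defined; the only difficulty is that this kernel develops a near-singularity at $\alpha\approx\gamma$ as $\beta=\gamma+h\hat\gamma$ descends to the boundary. Writing
$$u(\gamma+h\hat\gamma)-u(\gamma)=\int_{\partial\Omega}\{\psi(\alpha)-\psi(\gamma)\}\big[\partial^\perp\Psi(\gamma+h\hat\gamma,\alpha)-\partial^\perp\Psi(\gamma,\alpha)\big]\,d\sigma(\alpha),$$
I would fix a compact neighbourhood $K$ of $\gamma$ in $N_{h_0}(\partial\Omega)$ and $\varepsilon>0$, and split the domain as $\partial\Omega=\Omega_\gamma(\delta)\cup(\partial\Omega\setminus\Omega_\gamma(\delta))$ for a radius $\delta$ chosen so that $|\psi(\alpha)-\psi(\gamma)|<\varepsilon$ on $\Omega_\gamma(\delta)$; uniform continuity of $\psi$ on compact sets makes this choice uniform for $\gamma$ ranging over a compact subset of $\partial\Omega$. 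The decisive structural point is that the entire jump of the double-layer potential across $\partial\Omega$ is carried by the term $\psi(\gamma)\int_{\partial\Omega}\partial^\perp\Psi\,d\sigma$ evaluated in \cref{values}, so subtracting $\psi(\gamma)$ is precisely what renders $u$ continuous.

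On the far piece $\partial\Omega\setminus\Omega_\gamma(\delta)$ the argument is routine: since $\mathfrak p(\beta^{-1}\alpha)$ stays bounded below there, the map $\beta\mapsto\partial^\perp\Psi(\beta,\alpha)$ is continuous at $\gamma$ uniformly in $\alpha$, and the far-field decay of $\partial^\perp\Psi$ furnishes an integrable majorant independent of $h$; dominated convergence then gives that the far contribution tends to $0$ as $h\to0^{+}$, uniformly over $K$.

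The near piece is the crux and the step I expect to be the main obstacle. Here one cannot simply bound the contribution by $\varepsilon\int_{\Omega_\gamma(\delta)}|\partial^\perp\Psi|\,d\sigma$, because the direct-value kernel $\partial^\perp\Psi(\gamma,\cdot)$ fails to be absolutely integrable at the pole, a fact already reflected by the finite, regularized value recorded in \cref{values}. My plan is to extract the explicit form of the kernel by applying $\partial^\perp=\iota(E-\bar E)/|\zeta|$ from \cref{derivativeformula} in the $\alpha$ variable to $\Psi(\beta,\alpha)=2c\,|C(\beta,\alpha)-Q(\beta,\alpha)|^{-n}$ of \cref{fundamental solution expression}, and then to decompose it into a tangential part, whose principal value about the pole vanishes by the symmetry of the Korányi ball, and a transversal part proportional to the height of $\beta$ above $\partial\Omega$, which concentrates at the pole and accounts for the jump of \cref{values} as $h\to0^{+}$. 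Combining the smallness $|\psi(\alpha)-\psi(\gamma)|<\varepsilon$ with a uniform-in-$h$ estimate for the regularized near-integral — the Heisenberg counterpart of the classical Giraud--Lyapunov bound for double-layer kernels, proved in the sub-Riemannian gauge coordinates of \cref{singlelayer} using $d\sigma=\tfrac{|\zeta|}{2}\,ds$ — one bounds the near contribution by a constant multiple of $\varepsilon$. Choosing $\delta$ accordingly and then letting $h\to0^{+}$ to kill the far part yields $|u(\gamma+h\hat\gamma)-u(\gamma)|=O(\varepsilon)$, with all estimates uniform over compact neighbourhoods of $\gamma$; the genuinely delicate point throughout is securing the near-pole bound uniformly as $\beta$ approaches the surface, which is where the continuity modulus of $\psi$ and the cancellation in $\partial^\perp\Psi$ must be exploited together.
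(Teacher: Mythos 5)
Your skeleton matches the paper's: the paper likewise extracts the explicit kernel by applying \cref{derivativeformula} to \cref{fundamental solution expression} (recorded as \cref{delperp}), splits $\partial\Omega$ into $\Omega_1=\partial\Omega\cap B_r(\gamma)$ and $\Omega_2=\partial\Omega\setminus\Omega_1$, uses uniform continuity of $\psi$ on the near piece, and uses regularity of the kernel away from the pole on the far piece. One local difference: on $\Omega_2$ the paper does not use dominated convergence but a quantitative mean-value bound, $\abs{\partial^\perp\Psi(\beta,\alpha)-\partial^\perp\Psi(\gamma,\alpha)}\leq c_2\,\mathfrak{p}(\gamma^{-1}\beta)\,\mathfrak{p}(\gamma^{-1}\alpha)^{-2(n+2)}$ as in \cref{omega2integral}; this gives an explicit modulus linear in $\mathfrak{p}(\gamma^{-1}\beta)$ and hence the claimed uniformity over compact neighbourhoods directly, whereas dominated convergence alone yields only pointwise convergence in $\gamma$ and you would still need equicontinuity to upgrade it --- adopt the quantitative bound.

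The genuine divergence is on the near piece, and here your diagnosis is correct but your remedy is not carried out. The paper does take essentially the route you reject: it bounds the $\Omega_1$ contribution by $\max_{\alpha\in\Omega_1}\abs{\psi(\alpha)-\psi(\gamma)}$ times the kernel mass, estimated in \cref{equation1} by keeping $r<\lambda=\mathfrak{p}(\beta^{-1}\gamma)$ so the pole stays off the surface; note that this constant degenerates as $h\to0^+$ (then $\lambda\to0$ forces $r\to0$ while $(\lambda-r)^{-2(n+2)}$ blows up), and no uniform-in-$h$ bound on $\int_{\Omega_1}\abs{\partial^\perp\Psi(\gamma,\alpha)}\,d\sigma$ is supplied --- indeed on $\{t=0\}$ the kernel is of size $\mathfrak{p}(\gamma^{-1}\alpha)^{-(2n+1)}$ on a surface of homogeneous dimension $2n+1$, so its absolute integral diverges logarithmically at the pole and the finite values in \cref{values} arise only from the odd cancellation you identify (the term involving $\Re\big((\zeta-\zeta'')\cdot\bar{\zeta''}\big)$ is what must be integrated in the principal-value sense). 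So your tangential/transversal decomposition, with the transversal part carrying the jump of \cref{values} and the tangential part killed by symmetry, is a sharper and more honest treatment of exactly the point the paper glosses. The gap is that your pivotal step --- the uniform-in-$h$ Giraud--Lyapunov-type estimate for the regularized near-integral in gauge coordinates --- is asserted, not proved, and it is the whole content of the lemma: without it the near contribution is not shown to be $O(\varepsilon)$ uniformly as $\beta$ descends to $\partial\Omega$. As it stands, then, your argument is a viable and arguably more rigorous plan than the published proof, but it is a plan; to complete it you must either prove that estimate (controlling the angular cancellation uniformly in the height $h$, including its degeneration near the characteristic point $\zeta=0$ where the factor $\abs{\zeta}^{-1}$ in \cref{delperp} blows up) or fall back on the paper's cruder pairing of \cref{equation1} with the smallness of $\psi(\alpha)-\psi(\gamma)$, made uniform in $h$.
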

\begin{proof}
	For $\alpha=(\zeta,t)$ such that $\abs{\zeta}\neq0,$ let $$K(\beta,\alpha)=2\zeta^2{\zeta'}^2-3\zeta\bar{\zeta'}\abs{\zeta}^2-\zeta\bar{\zeta'}\abs{\zeta'}^2+\iota\abs{\zeta}^2(t-t').$$
	With a certain amount of work using \cref{derivativeformula}, we obtain
	\begin{equation} \label{delperp}
			\frac{\partial}{\partial n_0}\Psi(\beta,\alpha)=-\iota\frac{4nc \  {\mathfrak{p}(\beta^{-1}\alpha)}^{-2(n+2)}}{\abs{\zeta}} K(\beta,\alpha).
		\end{equation}
	Let $B_r(\gamma)$ denote the Kor{\'a}nyi-like ball in $\mathbb{H}_n$, centered at $\gamma$ and having radius $r$. Set
	$$\Omega_1=\partial\Omega\cap B_r(\gamma), \ \Omega_2=\partial\Omega\setminus\Omega_1,$$
	and let $r<\mathfrak{p}(\beta^{-1}\gamma)=\lambda$ (say). For $\beta\neq\alpha$, using \cref{delperp}
	$$\abs{\int_{\Omega_1}\partial^\perp\Psi(\beta,\alpha) \ d\sigma(\alpha)}\leq 4nc \int_{\Omega_1} \frac{K(\beta,\alpha) \ {\mathfrak{p}(\beta^{-1}\alpha)}^{-2(n+2)}}{\abs{\zeta}} \ d\sigma(\alpha).$$
		As $\lambda-r\leq\mathfrak{p}(\beta^{-1}\alpha),$ we get
	\begin{align} \label{equation1}
        \abs{\int_{\Omega_1}\partial^\perp\Psi(\beta,\alpha) \ d\sigma(\alpha)}		&\leq 4nc \ \sup_{\alpha\in\Omega_1}\abs{K(\beta,\alpha)}\int_{\Omega_1}\frac{1}{r(\lambda-r)^{2(n+2)}} \ d\sigma(\alpha), \nonumber\\
		&\leq \frac{4nc \ \sup_{\alpha\in\Omega_1}\abs{K(\beta,\alpha)}}{(\lambda-1)^{2(n+2)}}\abs{\Omega_1},
	\end{align}
	where $\abs{\Omega_1}$ denote the surface measure of $\Omega_1$.
	Using mean value theorem, we have
	\begin{align*}
		\abs{\partial^\perp\Psi(\beta,\alpha)-\partial^\perp\Psi(\gamma,\alpha)}&\leq c_1 \ \abs{\nabla_{\gamma}(\partial^\perp\Psi(\beta,\alpha))} \ \mathfrak{p}(\gamma^{-1}\beta),\\
		&\leq c_2 \frac{\mathfrak{p}(\gamma^{-1}\beta)}{(\mathfrak{p}(\gamma^{-1}\alpha))^{2(n+2)}},
	\end{align*}
	for some suitable constants $c_1$ and $c_2.$ Now,
	\begin{equation} \label{omega2integral}
		\int_{\Omega_2}\abs{\partial^\perp\Psi(\beta,\alpha)-\partial^\perp\Psi(\gamma,\alpha)} d\sigma(\alpha)\leq c_2\int_{\Omega_2} \frac{\mathfrak{p}(\gamma^{-1}\beta)}{(\mathfrak{p}(\gamma^{-1}\alpha))^{2(n+2)}} d\sigma(\alpha).
	\end{equation}
	Using \cref{norm formula}, we obtain $$\frac{\mathfrak{p}(\gamma^{-1}\beta)}{(\mathfrak{p}(\gamma^{-1}\alpha))^{2(n+2)}}=\abs{\frac{\abs{\zeta'}^2+\abs{\zeta''}^2+\iota(t''-t')-2\zeta'\cdot\bar{\zeta''}}{\big(\abs{\zeta''}^2+\abs{\zeta}^2+\iota(t-t'')-2\zeta''\cdot\bar{\zeta}\big)^{2(n+2)}} },$$
	where $\gamma=(\zeta'',t'').$ Clearly the term on the right-hand side in \cref{omega2integral} remains bounded on $\Omega_2$. Combining \cref{equation1} and \cref{omega2integral}, we get
\begin{align*}
	\abs{u(\beta)-u(\gamma)}&=\int_{\partial\Omega}\{\psi(\alpha)-\psi(\gamma)\} \big(\partial^\perp\Psi(\beta,\alpha)-\partial^\perp\Psi(\gamma,\alpha)\big) \ d\sigma(\alpha),\\
	&\leq c_3 \ \bigg(\max_{\alpha\in\Omega_1}\abs{\psi(\alpha)-\psi(\gamma)} + \mathfrak{p}(\gamma^{-1}\beta)\int_{\Omega_2}\frac{1}{r^{2(n+2)}}d\sigma(\alpha)\bigg).
\end{align*}
For any $\epsilon>0, \ \psi$ being uniformly continuous gives us the liberty to choose a $\delta>0$ so that $B_\delta(\gamma)\subseteq\Omega_1$ and
$$\max_{\alpha\in\Omega_1}\abs{\psi(\alpha)-\psi(\gamma)}<\frac{\epsilon}{2c_3}.$$
Choosing $\delta<\frac{\epsilon}{2c_3f(r)},$ where $f(r)=\int_{\Omega_2}\frac{1}{r^{2n+2}}d\sigma(\alpha),$ we observe that
$$\abs{u(\beta)-u(\gamma)}<\epsilon,$$
whenever $\mathfrak{p}(\gamma^{-1}\beta)<\delta.$
\end{proof}
\begin{theorem} \label{doublelayerpotential}
	For $\beta\in\partial\Omega,$ the double layer potential $\tilde{V}$ takes following limiting values:
	$$\lim_{\gamma\to\beta}\tilde{V}(\gamma)=  \begin{cases}
\int_{\partial \Omega} \psi(\alpha) \ \partial^\perp\Psi(\beta,\alpha) \ d\sigma(\alpha)-\psi(\beta), & \ \gamma\in\Omega\\
\int_{\partial \Omega} \psi(\alpha) \ \partial^\perp\Psi(\beta,\alpha) \ d\sigma(\alpha)+\psi(\beta), & \ \gamma\in\mathbb{H}_n\setminus\bar{\Omega}.
\end{cases}$$
\end{theorem}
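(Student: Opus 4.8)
The plan is to split the double-layer potential into a \emph{regularised} part, whose boundary limit is controlled by \cref{ucontinuous}, and a pure mass term governed by \cref{values}. Fix $\beta\in\partial\Omega$ and, for $\gamma\in N_{h_0}(\partial\Omega)\setminus\partial\Omega$ close to $\beta$, I would write
\begin{equation*}
\tilde V(\gamma)=\int_{\partial\Omega}\{\psi(\alpha)-\psi(\beta)\}\,\partial^\perp\Psi(\gamma,\alpha)\,d\sigma(\alpha)+\psi(\beta)\int_{\partial\Omega}\partial^\perp\Psi(\gamma,\alpha)\,d\sigma(\alpha).
\end{equation*}
The first summand is precisely the function treated in \cref{ucontinuous} (with the roles of $\beta$ and $\gamma$ interchanged, $\beta$ now being the fixed boundary point), and the second is $\psi(\beta)$ times the integral whose three values were computed in \cref{values}.

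For the first summand, \cref{ucontinuous} gives that it converges, as $\gamma\to\beta$ along the tubular neighbourhood, to its boundary value
\begin{equation*}
\int_{\partial\Omega}\{\psi(\alpha)-\psi(\beta)\}\,\partial^\perp\Psi(\beta,\alpha)\,d\sigma(\alpha),
\end{equation*}
and this limit is independent of the side from which $\gamma$ approaches. By linearity and the value $\int_{\partial\Omega}\partial^\perp\Psi(\beta,\alpha)\,d\sigma(\alpha)=-1$ recorded in \cref{values}, this boundary value equals $\int_{\partial\Omega}\psi(\alpha)\,\partial^\perp\Psi(\beta,\alpha)\,d\sigma(\alpha)+\psi(\beta)$.

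For the second summand I would simply insert the remaining two values from \cref{values}: as $\gamma\to\beta$ the integral $\int_{\partial\Omega}\partial^\perp\Psi(\gamma,\alpha)\,d\sigma(\alpha)$ tends to $-2$ when $\gamma\in\Omega$ and to $0$ when $\gamma\in\mathbb{H}_n\setminus\bar{\Omega}$, contributing $-2\psi(\beta)$ and $0$ respectively. Adding the two summands yields $\int_{\partial\Omega}\psi(\alpha)\,\partial^\perp\Psi(\beta,\alpha)\,d\sigma(\alpha)-\psi(\beta)$ for the interior limit and $\int_{\partial\Omega}\psi(\alpha)\,\partial^\perp\Psi(\beta,\alpha)\,d\sigma(\alpha)+\psi(\beta)$ for the exterior limit, which are exactly the claimed jump relations.

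The one point that needs care—and which I expect to be the only genuine obstacle—is the legitimacy of the splitting together with the invocation of \cref{ucontinuous}: one must check that the regularised integrand is integrable uniformly in $\gamma$ near $\beta$ (so that the two summands are separately well defined) and that the convergence in \cref{ucontinuous} holds for $\gamma$ approaching $\beta$ from \emph{both} sides of $\partial\Omega$, not merely along $h\to 0^+$. Since the estimate in \cref{ucontinuous} depends only on $\mathfrak p(\gamma^{-1}\beta)$ and not on the sign of the normal displacement, the two-sided statement follows from the same bound, and everything else reduces to bookkeeping with the mass values of \cref{values}.
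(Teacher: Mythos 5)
Your proposal takes essentially the same approach as the paper: the paper's proof is exactly this decomposition, writing $\tilde{V}(\gamma)=u(\gamma)+\psi(\beta)\,\omega(\gamma)$ with $u$ as in \cref{ucontinuous} and $\omega(\gamma)=\int_{\partial\Omega}\partial^\perp\Psi(\gamma,\alpha)\,d\sigma(\alpha)$ evaluated via \cref{values}, and your jump arithmetic (using the boundary value $-1$ so that $u(\beta)=\int_{\partial\Omega}\psi\,\partial^\perp\Psi(\beta,\cdot)\,d\sigma+\psi(\beta)$) matches what the paper leaves implicit. Your closing remark that \cref{ucontinuous} must be read two-sidedly, which holds because its estimate depends only on $\mathfrak{p}(\gamma^{-1}\beta)$, is a detail the paper's terse proof passes over silently, but the substance of the argument is identical.
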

\begin{proof}
	Using \cref{integralcontinuous}, the integral defined above is a continuous function on $\partial\Omega$. We can write $\tilde{V}$ as
	$$\tilde{V}(\beta)=u(\beta)+\psi(\gamma)\omega(\beta), \ \ \ \beta=\gamma+h\hat{\gamma}\in N_{h_0}(\partial\Omega),$$ where $u$ is as defined in  \cref{ucontinuous} and $\omega(\beta)=\int_{\partial \Omega}\partial^\perp\Psi(\beta,\alpha) \ d\sigma(\alpha).$ The remaining part of the proof now follows using \cref{ucontinuous}.
\end{proof}
\begin{corollary}
	The double-layer potential $\tilde{V}$ can be extended in a continuous manner from $\Omega$ to $\bar{\Omega}$ and from $\mathbb{H}_n\setminus\bar{\Omega}$ to $\mathbb{H}_n\setminus\Omega.$
\end{corollary}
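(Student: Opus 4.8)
The plan is to read off the continuous extensions directly from the limiting values supplied by \cref{doublelayerpotential}, and then to verify genuine continuity on the closed set by treating interior and boundary approaches separately. Since $\tilde{V}$ is already well-behaved off $\partial\Omega$, the statement is essentially a repackaging of that theorem together with the continuity of the boundary integral. First I would define the candidate interior extension $\tilde{V}^-$ on $\bar{\Omega}$ by setting $\tilde{V}^-(\beta)=\tilde{V}(\beta)$ for $\beta\in\Omega$ and
$$\tilde{V}^-(\beta)=\int_{\partial\Omega}\psi(\alpha)\,\partial^\perp\Psi(\beta,\alpha)\,d\sigma(\alpha)-\psi(\beta)$$
for $\beta\in\partial\Omega$, and the exterior extension $\tilde{V}^+$ on $\mathbb{H}_n\setminus\Omega$ analogously, with $-\psi(\beta)$ replaced by $+\psi(\beta)$ on the boundary. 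By \cref{potentials}, $\tilde{V}$ is $\Delta_0$-harmonic on the open sets $\Omega$ and $\mathbb{H}_n\setminus\bar{\Omega}$, hence continuous there, so continuity of the extensions only has to be checked at points of $\partial\Omega$.

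The key observation I would isolate is that the boundary-value map
$$b(\beta):=\int_{\partial\Omega}\psi(\alpha)\,\partial^\perp\Psi(\beta,\alpha)\,d\sigma(\alpha)$$
is continuous along $\partial\Omega$: it is finite by \cref{integralcontinuous}, and its continuity as a function on $\partial\Omega$ was already recorded in the proof of \cref{doublelayerpotential}. Combined with the continuity of $\psi\in C_*(\partial\Omega)\subset C(\partial\Omega)$, this makes $\beta\mapsto b(\beta)\mp\psi(\beta)$ continuous on $\partial\Omega$, which is precisely the restriction of $\tilde{V}^\mp$ to the boundary.

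To conclude continuity of $\tilde{V}^-$ at a fixed $\beta\in\partial\Omega$, I would take an arbitrary sequence $\gamma_k\to\beta$ in $\bar{\Omega}$ and split it into the subsequence lying in $\Omega$ and the one lying in $\partial\Omega$. For the interior subsequence, \cref{doublelayerpotential} gives $\tilde{V}(\gamma_k)\to b(\beta)-\psi(\beta)=\tilde{V}^-(\beta)$; for the boundary subsequence, the continuity of $b-\psi$ gives $\tilde{V}^-(\gamma_k)\to\tilde{V}^-(\beta)$. Since both subsequences tend to the same value, the full sequence does as well, establishing continuity on $\bar{\Omega}$. The argument for $\tilde{V}^+$ on $\mathbb{H}_n\setminus\Omega$ is identical, invoking the second case of \cref{doublelayerpotential} and the sign $+\psi(\beta)$.

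The main obstacle, and in fact the only delicate point, is ensuring that the interior limit in \cref{doublelayerpotential} is independent of the direction of approach to $\beta$, including tangential approach along $\partial\Omega$, so that the interior and boundary subsequences above are forced to share a common limit. This independence is exactly what the uniform convergence over compact neighbourhoods established in \cref{ucontinuous} secures; without that uniformity the two one-sided behaviours could fail to match and the extended function would not be continuous. Once that uniformity is in hand, the corollary follows with no further computation.
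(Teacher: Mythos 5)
Your proposal is correct and takes essentially the same route as the paper, which states this corollary without separate proof as an immediate consequence of \cref{doublelayerpotential}: you merely make explicit the standard details, namely defining the extensions by the boundary limiting values, using the continuity of the boundary integral already recorded in the proof of \cref{doublelayerpotential}, and correctly identifying the uniformity in \cref{ucontinuous} as what makes the limit independent of the mode of approach. No gaps.
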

\begin{theorem} \label{delperpsingle layer}
	For $\beta\in\partial\Omega,$ the single layer potential $V$ satisfies the following:
$$\lim_{\gamma\to\beta}\partial^\perp V(\gamma)=  \begin{cases}
\int_{\partial \Omega} \psi(\alpha) \ \partial^\perp\Psi(\beta,\alpha) \ d\sigma(\alpha)-\psi(\beta), & \gamma\in\Omega,\\
\int_{\partial \Omega} \psi(\alpha) \ \partial^\perp\Psi(\beta,\alpha) \ d\sigma(\alpha)+\psi(\beta), & \ \gamma\in\mathbb{H}_n\setminus\bar{\Omega}.
\end{cases}$$
\end{theorem}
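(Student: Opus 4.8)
The plan is to reduce the statement to the double-layer jump relation already established in \cref{doublelayerpotential}. For $\gamma\in N_{h_0}(\partial\Omega)\setminus\partial\Omega$ the kernel $\Psi(\gamma,\alpha)$ is smooth in $\gamma$, and the decay built into $\psi\in C_*(\partial\Omega)$ permits differentiation under the integral sign, so that
$$\partial^\perp V(\gamma)=\int_{\partial\Omega}\psi(\alpha)\,\partial^\perp_\gamma\Psi(\gamma,\alpha)\,d\sigma(\alpha),$$
where $\partial^\perp_\gamma$ denotes the horizontal normal derivative taken in the first argument. My first step is to make this adjoint kernel explicit. Since the gauge is inversion-symmetric, $\mathfrak p(\gamma^{-1}\alpha)=\mathfrak p(\alpha^{-1}\gamma)$ and hence $\Psi(\gamma,\alpha)=\Psi(\alpha,\gamma)$; differentiating in the first slot is therefore the same as applying \cref{delperp} with the pole and the variable interchanged, which gives $\partial^\perp_\gamma\Psi(\gamma,\alpha)=\partial^\perp\Psi(\alpha,\gamma)$. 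Near the diagonal this kernel is of order $\mathfrak p(\gamma^{-1}\alpha)^{-(2n+1)}$, exactly one order more singular than the single-layer kernel and of the same order as the double-layer kernel.

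The heart of the argument is to compare this adjoint kernel with the genuine double-layer kernel $\partial^\perp\Psi(\gamma,\alpha)$ of \cref{potentials} through their difference
$$R(\gamma,\alpha):=\partial^\perp_\gamma\Psi(\gamma,\alpha)-\partial^\perp\Psi(\gamma,\alpha).$$
Writing $\gamma=(\zeta'',t'')$ and using \cref{delperp} together with \cref{norm formula} and the boundary relation $t=t''=0$, the two terms equal $-\iota\,4nc\,\mathfrak p(\gamma^{-1}\alpha)^{-2(n+2)}$ times $K(\alpha,\gamma)/\abs{\zeta''}$ and $K(\gamma,\alpha)/\abs{\zeta}$ respectively. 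On the diagonal $\zeta=\zeta''$ one has $\abs{\zeta}=\abs{\zeta''}$ and $K(\alpha,\gamma)=K(\gamma,\alpha)$, so the leading singularities cancel; this is the sub-Riemannian analogue of the classical fact that the outward horizontal normals at two nearby boundary points differ by $O(\mathfrak p(\gamma^{-1}\alpha))$. The cancellation gains one power of the gauge, so $R(\gamma,\alpha)=O\big(\mathfrak p(\gamma^{-1}\alpha)^{-2n}\big)$, the same order as $\Psi$ itself. By the estimate already used in \cref{singlelayer}, the potential $\int_{\partial\Omega}\psi(\alpha)R(\gamma,\alpha)\,d\sigma(\alpha)$ then converges and extends continuously across $\partial\Omega$.

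With the decomposition
$$\partial^\perp V(\gamma)=\tilde V(\gamma)+\int_{\partial\Omega}\psi(\alpha)R(\gamma,\alpha)\,d\sigma(\alpha)$$
the limiting values follow immediately. The last term is continuous across $\partial\Omega$, so the jump of $\partial^\perp V$ coincides with the jump of $\tilde V$, namely $-\psi(\beta)$ from the interior and $+\psi(\beta)$ from the exterior by \cref{doublelayerpotential}. Adding the boundary value of the continuous remainder to the direct integral of $\tilde V$ reassembles precisely the principal-value integral $\int_{\partial\Omega}\psi(\alpha)\,\partial^\perp\Psi(\beta,\alpha)\,d\sigma(\alpha)$ formed with the single-layer's own kernel, which is the direct term displayed in the statement. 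Hence the interior limit is this value minus $\psi(\beta)$ and the exterior limit is this value plus $\psi(\beta)$, as claimed.

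I expect the cancellation in the second paragraph to be the main obstacle: one must check that the order-$\mathfrak p^{-(2n+1)}$ parts of the two kernels agree on $\partial\Omega$ sharply enough that $R$ drops a full order, and this has to be re-examined near the characteristic points, where $\partial^\perp$ is defined only through its limiting prescription and where the factors $1/\abs{\zeta}$ and $1/\abs{\zeta''}$ degenerate. If displaying this cancellation cleanly proves awkward, an equivalent route is to rerun, for the adjoint kernel $\partial^\perp_\gamma\Psi(\gamma,\alpha)$ directly, the three-step scheme behind \cref{doublelayerpotential}---the mass identity of \cref{values} and the uniform-continuity estimate of \cref{ucontinuous}---the only genuinely new computation being the total mass of $\partial^\perp_\gamma\Psi(\gamma,\alpha)$ over $\partial\Omega$ in place of \cref{values}.
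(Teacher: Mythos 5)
Your proposal is correct and takes essentially the same route as the paper: the paper's proof likewise uses the symmetry of the kernel ($\nabla_\beta\Psi(\beta,\alpha)=\nabla_\alpha\Psi(\beta,\alpha)$) to combine $\partial^\perp V$ with the double-layer potential $\tilde{V}$ into a single integral whose kernel, through the near-diagonal cancellation of the horizontal normals, gains one order of regularity and so extends continuously across $\partial\Omega$, after which the jump relations are read off from \cref{doublelayerpotential}. The only difference is presentational: you isolate and estimate the remainder kernel $R(\gamma,\alpha)$ explicitly (and rightly flag the characteristic points), whereas the paper asserts the continuity of the combined integral in one line, ``analogous to the double-layer potential.''
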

	\begin{proof}
		As $\nabla_\beta\Psi(\beta,\alpha)=\nabla_\alpha\Psi(\beta,\alpha),$ we have
		$$(\nabla V(\beta))\cdot\hat{\gamma}+\tilde{V}(\beta)=\int_{\partial \Omega}\psi(\alpha) \  (\nabla_\alpha\Psi(\beta,\alpha))\cdot\{\hat{\gamma}+\hat{\beta}\} \ d\sigma(\alpha),$$
where $\beta=\gamma+h\hat{\gamma}\in N_{h_0}(\partial\Omega).$ Using \cref{doublelayerpotential}, analogous to the double-layer potential $\tilde{V},$ the right-hand side can be shown to be continuous on $N_{h_0}(\partial\Omega).$ The proof now follows from \cref{doublelayerpotential}.
\end{proof}
\begin{theorem}
	The following limit holds uniformly for all $\beta$ in $\partial\Omega:$
	$$\lim_{\epsilon\to0^+}\{\nabla\tilde{V}(\beta+\epsilon\hat{\beta})-\nabla\tilde{V}(\beta-\epsilon\hat{\beta})\}\cdot\hat{\beta}=0.$$
\end{theorem}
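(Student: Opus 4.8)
The assertion is of Lyapunov--Tauber type: the normal derivative of the double-layer potential carries no jump across $\partial\Omega$. The plan is to isolate from $\tilde V$ the single piece that is responsible for the jump in \cref{doublelayerpotential} and to show that it disappears upon differentiating in the normal direction. Fix the boundary point $\beta\in\partial\Omega$ and consider the normal segment $\beta\pm\epsilon\hat\beta$. Exactly as in the proof of \cref{doublelayerpotential}, I would decompose
$$\tilde V(\beta\pm\epsilon\hat\beta)=u(\beta\pm\epsilon\hat\beta)+\psi(\beta)\,\omega(\beta\pm\epsilon\hat\beta),$$
where $\omega(\cdot)=\int_{\partial\Omega}\partial^\perp\Psi(\cdot,\alpha)\,d\sigma(\alpha)$ and $u(\cdot)=\int_{\partial\Omega}\{\psi(\alpha)-\psi(\beta)\}\partial^\perp\Psi(\cdot,\alpha)\,d\sigma(\alpha)$ is the potential of the density vanishing at $\beta$, as in \cref{ucontinuous}. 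By \cref{values}, $\omega\equiv-2$ throughout $\Omega$ and $\omega\equiv0$ throughout $\mathbb{H}_n\setminus\bar\Omega$; thus $\omega$ is locally constant off $\partial\Omega$ and $\nabla\omega\cdot\hat\beta=0$ at both points $\beta\pm\epsilon\hat\beta$. Hence $\nabla\tilde V(\beta\pm\epsilon\hat\beta)\cdot\hat\beta=\nabla u(\beta\pm\epsilon\hat\beta)\cdot\hat\beta$, and the theorem reduces to proving that $\nabla u\cdot\hat\beta$ admits one common limit as $\epsilon\to0^+$ from either side.

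For this I would differentiate $u$ under the integral sign, push the $\beta$-gradient onto the kernel, and use the symmetry $\nabla_\beta\Psi(\beta,\alpha)=\nabla_\alpha\Psi(\beta,\alpha)$ exploited earlier together with the explicit expression \cref{delperp} to write $\hat\beta\cdot\nabla\,\partial^\perp\Psi$ through $K(\beta,\alpha)$. The key point, just as in \cref{ucontinuous}, is that the density $\psi(\alpha)-\psi(\beta)$ vanishes at the very point $\beta$ being approached. Splitting $\partial\Omega$ into a Kor\'{a}nyi-like ball $\Omega_1=\partial\Omega\cap B_r(\beta)$ and its complement $\Omega_2$, I would bound the $\Omega_2$-integral by the mean value theorem exactly as there, while on $\Omega_1$ the only obstruction to a common limit is the concentrated part of the kernel that generically produces the double-layer jump; since here it is weighted by a density that vanishes at $\beta$, this part no longer contributes and the two one-sided limits agree. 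Local uniformity in $\beta$ is inherited from the uniform estimates of \cref{ucontinuous}, and uniformity over the noncompact part of $\partial\Omega$ follows from the decay $\psi\in C_*(\partial\Omega)$.

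The main obstacle is the strength of the singularity. Two derivatives of $\Psi\sim\mathfrak p(\beta^{-1}\alpha)^{-2n}$ produce a hypersingular kernel of order $\mathfrak p^{-(2n+2)}$, so the integral for $\nabla u$ is not absolutely convergent and must be read as a principal value; a single vanishing factor $\psi(\alpha)-\psi(\beta)=O(\mathfrak p(\beta^{-1}\alpha))$ does not by itself restore local integrability, and the cancellation has to be extracted in the normal direction by hand. It is essential that only the \emph{normal} component is claimed to vanish, since the tangential components of $\nabla\tilde V$ genuinely jump; this forces the argument to use the fine structure of $K(\beta,\alpha)$ rather than crude size bounds, and the estimates must be controlled uniformly near the characteristic point $\lvert\zeta\rvert=0$ where $\partial^\perp$ degenerates. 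Conceptually, the vanishing of the jump records the fact that $\Delta_0\tilde V$ is a pure double-layer distribution supported on $\partial\Omega$, carrying no single-layer ($\delta_{\partial\Omega}$) component, which is precisely the statement $[\partial^\perp\tilde V]=0$.
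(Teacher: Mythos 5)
Your skeleton coincides with what the paper intends: its entire proof is the single line ``along similar lines to the proof of \cref{doublelayerpotential}'', and the first half of your argument executes that line correctly. Writing $\tilde V(\beta\pm\epsilon\hat\beta)=u(\beta\pm\epsilon\hat\beta)+\psi(\beta)\,\omega(\beta\pm\epsilon\hat\beta)$ with the foot point fixed, and invoking \cref{values} to see that $\omega$ is locally constant on $\Omega$ and on $\mathbb{H}_n\setminus\bar\Omega$, so that $\hat\beta\cdot\nabla\omega$ vanishes at both evaluation points, is sound and is exactly the analogue of the decomposition used for \cref{doublelayerpotential}. After this step everything hinges on a \cref{ucontinuous}-type statement for $\hat\beta\cdot\nabla u$, which is where the theorem's actual content lives.

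There, however, is a genuine gap, and it is twofold. First, the target you set --- that $\hat\beta\cdot\nabla u(\beta\pm\epsilon\hat\beta)$ admits one \emph{common limit} from either side --- is strictly stronger than the theorem and is not available under the stated hypotheses: for $\psi$ merely in $C_*(\partial\Omega)$ the one-sided limits of the normal derivative of a double-layer potential need not exist at all (the classical Lyapunov--Tauber phenomenon, which is precisely why the statement is formulated as a vanishing \emph{difference}). Your own size count shows why the step would fail: differentiating \cref{delperp} once more produces a kernel of order $\mathfrak{p}(\beta^{-1}\alpha)^{-(2n+2)}$ against a surface measure controlling only $\mathfrak{p}^{2n+1}$ near the singularity, and the factor $\psi(\alpha)-\psi(\beta)$ is only $o(1)$ --- your bound $O(\mathfrak{p}(\beta^{-1}\alpha))$ presumes a Lipschitz modulus that $C_*(\partial\Omega)$ does not supply. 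The argument must instead be organized around the paired difference
\begin{equation*}
\int_{\partial\Omega}\{\psi(\alpha)-\psi(\beta)\}\,\hat\beta\cdot\big\{\nabla\partial^\perp\Psi(\beta+\epsilon\hat\beta,\alpha)-\nabla\partial^\perp\Psi(\beta-\epsilon\hat\beta,\alpha)\big\}\,d\sigma(\alpha),
\end{equation*}
where the far region gains a factor of $\epsilon$ from the mean value theorem along the normal segment, and the near region is handled by uniform continuity of $\psi$ together with the cancellation identity $\int_{\partial\Omega}\hat\beta\cdot\nabla\partial^\perp\Psi(\beta\pm\epsilon\hat\beta,\alpha)\,d\sigma(\alpha)=0$ that your $\omega$-step already provides. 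Second, even for the weaker claim you do not carry out the decisive estimate: you concede the cancellation ``has to be extracted in the normal direction by hand'' and then assert that the concentrated part ``no longer contributes''. At the only point where the theorem is nontrivial, the proposal asserts rather than proves --- though, in fairness, the paper's own one-line proof leaves the same estimate to the reader.
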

\begin{proof}
	The theorem can be proved along similar lines to the proof of \cref{doublelayerpotential}.
\end{proof}
	\noindent Define the integral operators $W,\tilde{W}:C_*(\partial\Omega)\to C_*(\partial\Omega)$ as
	$$(W\psi)(\beta):=\int_{\partial \Omega}\psi(\alpha)(\partial^\perp\Psi(\beta,\alpha))_{\alpha} \ d\sigma(\alpha),$$
	$$(\tilde{W}\phi)(\beta):=\int_{\partial \Omega}\phi(\alpha)(\partial^\perp\Psi(\beta,\alpha))_{\beta} \ d\sigma(\alpha),$$
	where $\beta\in\partial\Omega.$ We define the dual system $\langle C_*(\partial\Omega),C_*(\partial\Omega)\rangle$ as
	$$\langle\phi,\psi\rangle := \int_{\partial \Omega}\phi\psi \ d\sigma.$$
As $\Omega$ has a smooth boundary, \cref{integralcontinuous} implies that $W$ and $\tilde{W}$ are compact operators. Also, $W$ and $\tilde{W}$ are adjoint with respect to the above dual system.
\begin{theorem} \label{nullity}
	Nullity of each of the operators $I+W$ and $I+\tilde{W}$ is one.
\end{theorem}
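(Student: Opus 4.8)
The plan is to combine the Riesz--Schauder theory for the compact pair $(W,\tilde W)$ with the Gauss-type identity of \cref{values} and the boundary-value results established above. First I would note that, since $W$ and $\tilde W$ are compact and mutually adjoint with respect to the dual system $\langle\cdot,\cdot\rangle$, the Fredholm theory guarantees that $I+W$ and $I+\tilde W$ have the same finite nullity. Hence it is enough to show that one of them, say $I+W$, has a one-dimensional null space.

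For the lower bound I would exhibit an explicit null element. By \cref{values}, for every $\beta\in\partial\Omega$ one has $\int_{\partial\Omega}\partial^\perp\Psi(\beta,\alpha)\,d\sigma(\alpha)=-1$; equivalently, the constant density is an eigenfunction of $W$ with eigenvalue $-1$, so $(I+W)\mathbf 1=0$. This single relation is the entire source of the nontrivial kernel and already gives $\dim\ker(I+W)\ge 1$.

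For the upper bound I would argue that every null density is constant. Given $\psi$ with $(I+W)\psi=0$, I form the double-layer potential $\tilde V[\psi]$. By \cref{doublelayerpotential} its boundary limit from $\mathbb H_n\setminus\bar\Omega$ equals $(I+W)\psi=0$; since $\tilde V$ is $\Delta_0$-harmonic on the complementary half-space and $\psi\in C_*(\partial\Omega)$, the decay of $\Psi$ lets me apply the uniqueness theorem on that half-space to conclude $\tilde V\equiv 0$ there. The interior trace is then $-2\psi$ by the jump relation of \cref{doublelayerpotential}, while the gradient-jump theorem transfers the vanishing exterior normal data across $\partial\Omega$; feeding this into the uniqueness theorem on $\Omega$ (solutions unique up to additive constants) pins $\tilde V$ to a constant in $\Omega$ and forces $\psi$ to be constant. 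Together with the lower bound this yields $\dim\ker(I+W)=1$.

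The main obstacle lies in that last step, sharpened by two features of the unbounded setting. Analytically, every potential is an improper integral over the noncompact hypersurface $\partial\Omega$ carrying the degenerate weight $d\sigma=\tfrac{|\zeta|}{2}\,ds$, so the exterior uniqueness and the ``decay kills the additive constant'' reductions must be justified through the class $C_*(\partial\Omega)$ rather than by compactness. Geometrically, $\partial^\perp$ is the \emph{horizontal} normal derivative, and along $\{t=0\}$ it splits into a Euclidean normal ($\partial_t$) part and a tangential angular part; consequently the Euclidean normal-continuity supplied by the gradient-jump theorem does not by itself give continuity of $\partial^\perp\tilde V$, and reconciling the two---while handling the characteristic locus $\zeta=0$ through the limiting definition built into $\mathfrak C_\Omega$---is the delicate heart of the proof.
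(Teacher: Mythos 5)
Your proposal is correct and is essentially the paper's approach: the paper's proof is just a citation of \cite[Theorem 3.9]{shivani}, and that cited proof runs on exactly your lines --- constants lie in the null space of $I+W$ because $\int_{\partial\Omega}\partial^\perp\Psi(\beta,\alpha)\,d\sigma(\alpha)=-1$ by \cref{values}, a null density is forced to be constant via the double-layer potential, the jump relations of \cref{doublelayerpotential}, the normal-derivative continuity theorem and Neumann uniqueness, and Fredholm duality in the dual system transfers the count to $I+\tilde{W}$. The two difficulties you flag (exterior uniqueness needs decay at infinity through $C_*(\partial\Omega)$ on the unbounded half-space, and Euclidean normal continuity must be upgraded to continuity of $\partial^\perp$, whose angular part along $\{t=0\}$ is not controlled by the gradient-jump theorem alone) are exactly the adaptations from the bounded-domain case that the paper leaves implicit in its one-line citation.
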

\begin{proof}
	The proof follows along similar lines as that of \cite[Theorem 3.9]{shivani}.\end{proof}
\begin{theorem} \label{the solution}
	If $g\in C_*(\partial\Omega),$ then any solution $\phi$ of the integral equation 
	$$\phi(\beta)+\int_{\partial\Omega}\phi(\alpha) \partial^\perp\Psi(\beta,\alpha) \ d\sigma=g(\beta), \ \ \ \beta\in\partial\Omega,$$
	is also in $C_*(\partial\Omega).$ For such $\phi,$ the single-layer potential
	$$V(\beta)=\int_{\partial \Omega}\phi(\alpha) \Psi(\beta,\alpha) \ d\sigma, \ \ \ \beta\in\Omega,$$
	acts as a solution for the interior Neumann problem \eqref{mainproblem}.
\end{theorem}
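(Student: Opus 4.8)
\emph{Strategy.} The plan is to establish the two assertions in turn: first that any solution $\phi$ of the boundary integral equation automatically lies in $C_*(\partial\Omega)$, and then that the single-layer potential $V$ assembled from $\phi$ satisfies all the requirements of \eqref{mainproblem}. For the first assertion I would read the integral equation as the fixed-point identity $\phi=g-W\phi$, where $(W\phi)(\beta)=\int_{\partial\Omega}\phi(\alpha)\,\partial^\perp\Psi(\beta,\alpha)\,d\sigma(\alpha)$ is the direct value of the double-layer integral. Since $g\in C_*(\partial\Omega)$ by hypothesis, it is enough to show $W\phi\in C_*(\partial\Omega)$. Continuity of $W\phi$ on $\partial\Omega$ is already furnished by \cref{integralcontinuous} together with the splitting estimates of \cref{ucontinuous}, precisely as in the proof of \cref{doublelayerpotential}. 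The decay $W\phi(\beta)=O(\zeta^{-k})$ as $\beta\to\infty$ is the delicate point: I would insert the explicit kernel \eqref{delperp}, observe that $K(\beta,\alpha)$ grows only polynomially while ${\mathfrak{p}(\beta^{-1}\alpha)}^{-2(n+2)}$ decays rapidly in $\beta$, and bound the resulting tail integral by the same partition of $\partial\Omega$ into a Kor\'{a}nyi ball about $\beta$ and its complement used in \cref{singlelayer} and \cref{ucontinuous}. This yields $W\phi\in C_*(\partial\Omega)$, whence $\phi=g-W\phi\in C_*(\partial\Omega)$.

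For the second assertion I would verify the defining conditions of \eqref{mainproblem} and of the class $\mathfrak{C}_\Omega$ separately. That $V$ is $\Delta_0$-harmonic in $\Omega$ is recorded in \cref{potentials}. For $\beta\in\Omega$ the kernel $\Psi(\beta,\cdot)$ has no pole on $\partial\Omega$, so differentiation under the integral sign is legitimate and gives $V\in C^\infty(\Omega)\subset C^2(\Omega)$, while \cref{singlelayer} supplies $V\in C(\bar\Omega)$. The only characteristic point of $\partial\Omega=\{t=0\}$ is the origin, where $\abs{\zeta}=0$ and the expression \eqref{derivativeformula} for $\partial/\partial n_0$ degenerates; the existence of the limit defining $\partial^\perp V$ there is exactly what the one-sided limit theory of \cref{delperpsingle layer} provides. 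Together these give $V\in\mathfrak{C}_\Omega$.

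It remains to match the Neumann datum. By \cref{delperpsingle layer}, the one-sided limit of $\partial^\perp V(\gamma)$ as $\gamma\to\beta$ from within $\Omega$ equals the direct integral $\int_{\partial\Omega}\phi(\alpha)\,\partial^\perp\Psi(\beta,\alpha)\,d\sigma(\alpha)$ modified by a jump term $\pm\phi(\beta)$; the integral equation for $\phi$ is arranged so that this substitution returns exactly $g(\beta)$, giving $\partial^\perp V=g$ on $\partial\Omega$ and hence a solution of \eqref{mainproblem}. (The existence of a solution $\phi$ of the integral equation, which this theorem presupposes, is where \cref{nullity} and the Fredholm alternative enter, subject to the compatibility condition $\int_{\partial\Omega}g\,d\sigma=0$ of \cref{main theorem}.)

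\emph{Main obstacle.} I expect the difficulty to lie not in the jump relation itself but in the analysis at infinity and at the characteristic point. Because $\partial\Omega$ is unbounded, every integral above is improper, and the crux is to show that the tail contributions decay fast enough to (i) transfer the decay of $g$ onto $\phi$ in the first assertion and (ii) keep the one-sided limits of $\partial^\perp V$ uniform, including near the origin where the horizontal normal degenerates. This is precisely where the hypothesis $g=O(\zeta^{-k})$ with $k\ge 1$, built into $C_*(\partial\Omega)$, is indispensable, and the bounds \eqref{equation1}--\eqref{omega2integral} from \cref{ucontinuous} should serve as the template for the estimates required.
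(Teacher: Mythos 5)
Your proposal is correct and follows essentially the same route as the paper's own (very terse) proof, which simply observes that $g\in C_*(\partial\Omega)$ makes the single-layer potential $V$ well defined and then invokes the jump relation of \cref{delperpsingle layer}. You additionally spell out details the paper leaves implicit --- the fixed-point identity $\phi=g-\tilde{W}\phi$ transferring the $C_*$ decay to $\phi$, and the verification that $V\in\mathfrak{C}_\Omega$ at the characteristic point --- so your argument is a more complete rendering of the same skeleton, not a different approach.
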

\begin{proof}
	As $g\in C_*(\partial\Omega),$ the single-layer potential $V$ is well defined. The proof now follows from \cref{delperpsingle layer}.
\end{proof}

\noindent Now we find ourselves in a position to prove the first part of our main result in this section. 
\begin{proof} [Proof of \cref{main theorem}]
	\textit{(Necessity)} This can be proved using \cref{green formula} by substituting $f_1=1$ and for a solution $u$ of the problem \eqref{mainproblem}.\\
	\textit{(Sufficiency)} The Fredholm's theorem indicates that the inhomogeneous problem $\phi+\tilde{W}\phi=g$ admits a solution if and only if $g$ is orthogonal to a solution of $\psi+W\psi=0.$ Using \cref{nullity}, it is equivalent to saying
	$$\int_{\partial \Omega}g \ d\sigma=0.$$
	Finally using \cref{the solution}, the Neumann problem \eqref{mainproblem} has a solution.
\end{proof}
	
\section{\bf{The Neumann Function and Explicit Representation of the Solution}}
\begin{defn}
	The Neumann function for the pair $(\Delta_0,\Omega)$ is defined as a function $G$ that satisfies
	\begin{equation*}
	\left\{\begin{array}{l}
	\Delta_0G(\beta,\alpha) = \delta_\beta, \ \  \text{in} \ \Omega,\\
	\partial^\perp G(\beta,\alpha)=0, \ \ \text{on} \ \partial\Omega.\\
	\end{array}\right.
	\end{equation*}
	\end{defn}
\begin{lemma}
	Let $\beta^*$ denote the reflection of the point $\beta$ with respect to the boundary of the half-space $\Omega$ \textit{i.e.} $\beta^*=[\zeta',-t'].$ Then for $\alpha\neq\beta, G(\beta,\alpha)=\bar{g}_\beta(\alpha)+\bar{g}_{\beta^*}(\alpha)$ acts as the Neumann function when applied to circular functions.
\end{lemma}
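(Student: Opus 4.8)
The plan is to verify directly that $G(\beta,\alpha)=\bar g_\beta(\alpha)+\bar g_{\beta^*}(\alpha)$ satisfies the two defining requirements of the Neumann function, carrying the classical method of images over to the sub-Riemannian setting. The essential point is that the \emph{plus} sign in front of the reflected term $\bar g_{\beta^*}$ (as opposed to the minus sign one would use for the Dirichlet Green's function) is exactly what forces the horizontal normal derivative to vanish on $\partial\Omega$.

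For the interior equation I would first note that a point $\beta=(\zeta',t')\in\Omega$ has $t'>0$, so its reflection $\beta^*=(\zeta',-t')$ lies in $\mathbb{H}_n\setminus\bar\Omega$. Hence $\bar g_{\beta^*}$ is smooth and $\Delta_0$-harmonic throughout $\Omega$ and contributes no singularity there, so the only singular part of $\Delta_0 G$ inside $\Omega$ is produced by $\bar g_\beta$. Because $\Delta_{\mathbb{H}_n}$, and therefore $\Delta_0$, is rotation invariant, averaging over the circle commutes with $\Delta_0$; consequently, when tested against circular functions, $\bar g_\beta$ inherits the fundamental-solution property of $g_\beta$, and matching the normalization of $g$ pins down the constant so that $\Delta_0 G=\delta_\beta$ in $\Omega$. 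This portion is routine.

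The heart of the argument is the boundary condition. On $\partial\Omega=\{t=0\}$ one has $F(\zeta,t)=t$, so $\nabla_0F=\sum_j(2y_jX_j-2x_jY_j)$, a vector of length $2\abs{\zeta}$, and expanding \eqref{derivativeformula} shows that, up to the orientation sign (which is immaterial for our purpose),
$$\frac{\partial}{\partial n_0}=\frac{1}{\abs{\zeta}}\Big[\sum_{j=1}^{n}\big(y_j\,\partial_{x_j}-x_j\,\partial_{y_j}\big)+2\abs{\zeta}^2\,\partial_t\Big].$$
The key observation is that the first operator $\sum_j(y_j\partial_{x_j}-x_j\partial_{y_j})$ is, up to sign, the generator of the rotations $\zeta\mapsto e^{\iota\theta}\zeta$. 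Since both $\bar g_\beta$ and $\bar g_{\beta^*}$ are circular in $\alpha$, this angular part annihilates $G$, so the horizontal normal derivative collapses to $\frac{\partial}{\partial n_0}G=\pm 2\abs{\zeta}\,\partial_t G$ on $\partial\Omega$. It then remains to show $\partial_t G$ vanishes at $t=0$. Comparing the defining quantities shows that $C(\beta^*,\alpha)$ and $C(\beta,\alpha^*)$ are complex conjugates while $Q(\beta^*,\alpha)=Q(\beta,\alpha^*)$, and since $\bar g$ depends only on $\abs{C}$ and $\abs{Q}^2/\abs{C}^2$ this yields the reflection identity $\bar g_{\beta^*}(\zeta,t)=\bar g_\beta(\zeta,-t)$. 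Hence
$$\partial_t G=(\partial_t\bar g_\beta)(\zeta,t)-(\partial_t\bar g_\beta)(\zeta,-t),$$
which is identically zero at $t=0$, giving $\partial^\perp G=0$ at every non-characteristic boundary point.

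Finally, the only characteristic point of $\partial\Omega$ is the identity $\zeta=0$, where $\partial^\perp$ is defined as the limit of $\frac{\partial}{\partial n_0}$; since $\frac{\partial}{\partial n_0}G=\pm2\abs{\zeta}\,\partial_t G\to 0$ as $\zeta\to 0$ along the boundary, the limit exists and equals $0$, so $G\in\mathfrak{C}_\Omega$ and $\partial^\perp G=0$ on all of $\partial\Omega$. I expect the main obstacle to be precisely this boundary computation: recognizing that circularity reduces the horizontal normal derivative to a multiple of the plain $t$-derivative, and establishing the reflection symmetry $\bar g_{\beta^*}(\zeta,t)=\bar g_\beta(\zeta,-t)$ cleanly. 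Once these two facts are secured, the classical image cancellation carries over verbatim.
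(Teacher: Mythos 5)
Your proof is correct, but it reaches the boundary condition by a genuinely different route than the paper. The paper verifies $\partial^\perp G=0$ on $\partial\Omega$ by brute force: it applies $E$ and $\bar{E}$ to $\abs{C(\beta,\alpha)}^2$, obtains $\partial^\perp\big(\abs{C(\beta,\alpha)}^2\big)=-4\abs{\zeta}(t-t')$, and then uses contiguous relations for the Gaussian hypergeometric function to produce explicit formulas for $\partial^\perp\bar{g}_\beta$ and $\partial^\perp\bar{g}_{\beta^*}$, carrying the factors $(t-t')$ and $(t+t')$ respectively; cancellation at $t=0$ then follows since $\abs{C(\beta,\alpha)}=\abs{C(\beta^*,\alpha)}$ there. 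You instead exploit structure: expanding \eqref{derivativeformula} on $\{t=0\}$, you observe that the angular part $\sum_j(y_j\partial_{x_j}-x_j\partial_{y_j})$ annihilates any rotation-invariant function, so on circular kernels $\partial^\perp$ collapses to $\mp 2\abs{\zeta}\,\partial_t$; then the reflection identity $\bar{g}_{\beta^*}(\zeta,t)=\bar{g}_\beta(\zeta,-t)$ (which holds exactly as you argue, since $C(\beta^*,\alpha)=\overline{C(\beta,\alpha^*)}$, $Q$ is $t$-independent, and $\bar{g}$ depends only on $\abs{C}$ and $\abs{Q}^2/\abs{C}^2$) makes $G$ even in $t$, so $\partial_t G$ vanishes identically at $t=0$. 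Your reduction is consistent with the paper's computation — with the paper's sign convention one gets $\partial^\perp=-2\abs{\zeta}\,\partial_t$ on circular functions, which indeed reproduces $\partial^\perp\big(\abs{C}^2\big)=-4\abs{\zeta}(t-t')$ — and your interior argument ($\bar{g}_{\beta^*}$ harmonic in $\Omega$ since $\beta^*\notin\bar{\Omega}$, averaging commutes with $\Delta_0$ by rotation invariance) coincides with the paper's. What each approach buys: the paper's computation yields the explicit normal derivatives of $\bar{g}_\beta$ and $\bar{g}_{\beta^*}$ as hypergeometric expressions, which have independent value (e.g.\ for Poisson-type kernels), while your argument is shorter, avoids hypergeometric machinery entirely, would apply verbatim to any circular kernel depending only on $\abs{C}$ and $\abs{Q}$, and — a small bonus over the paper — explicitly handles the characteristic point $\zeta=0$, where you correctly note $\partial^\perp$ is the limit of $\mp 2\abs{\zeta}\,\partial_t G$, which is zero.
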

\begin{proof}
	As $\bar{g}_{\beta^*}(\alpha)$ is harmonic in $\Omega,$ we first observe $$\Delta_0G(\beta,\alpha)=\Delta_0(\bar{g}_{\beta}(\alpha))=\delta_\beta.$$
	Next we have
	$$\bar{g}_{\beta}(\alpha)=a_0\abs{C(\beta,\alpha)}^{-n}F\bigg(\frac{n}{2};\frac{n}{2};n;\frac{\abs{Q(\beta,\alpha)}^2}{\abs{C(\beta,\alpha)}^2}\bigg),$$
	
	$$\bar{g}_{\beta^*}(\alpha)=a_0\abs{C(\beta^*,\alpha)}^{-n}F\bigg(\frac{n}{2};\frac{n}{2};n;\frac{\abs{Q(\beta^*,\alpha)}^2}{\abs{C(\beta^*,\alpha)}^2}\bigg).$$	
	\begin{align*}
	E\big(\abs{C(\beta,\alpha)}^2\big)&=\sum_{j=1}^{n}\zeta_j\bigg(\frac{\partial}{\partial\zeta_j}+\iota\bar{\zeta_j}\frac{\partial}{\partial t}\bigg)\bigg({\zeta_j}^2\bar{{\zeta_j}}^2+\abs{\zeta'}^4+t^2+t'^2+2\zeta_j\bar{{\zeta_j}}\abs{\zeta'}^2-2tt'\bigg)\\
	&=2\abs{\zeta}^2(\abs{\zeta}^2+\abs{\zeta'}^2+\iota(t-t')).
	\end{align*}
	Similarly, $\bar{E}\big(\abs{C(\beta,\alpha)}^2\big)=2\abs{\zeta}^2(\abs{\zeta}^2+\abs{\zeta'}^2-\iota(t-t'))$ and therefore \cref{derivativeformula} implies that
	$$\partial^\perp\big(\abs{C(\beta,\alpha)}^2\big)=-4\abs{\zeta}(t-t').$$
	Using elementary relations of Gaussian hypergeometric functions, we get
	$$\partial^\perp(\bar{g}_\beta(\alpha))=2n\abs{\zeta}a_0(t-t')\abs{C(\beta,\alpha)}^{-(n+2)}F\bigg(\frac{n}{2}+1;\frac{n}{2};n;\frac{\abs{Q(\beta,\alpha)}^2}{\abs{C(\beta,\alpha)}^2}\bigg).$$
	Going through similar steps,
	$$\partial^\perp(\bar{g}_{\beta^*}(\alpha))=2n\abs{\zeta}a_0(t+t')\abs{C(\beta^*,\alpha)}^{-(n+2)}F\bigg(\frac{n}{2}+1;\frac{n}{2};n;\frac{\abs{Q(\beta^*,\alpha)}^2}{\abs{C(\beta^*,\alpha)}^2}\bigg).$$
	As $\abs{C(\beta,\alpha)}^2=\abs{C(\beta^*,\alpha)}^2$ on the boundary $\partial\Omega$, we get
	$$\partial^\perp\big(\bar{g}_\beta(\alpha)+\bar{g}_{\beta^*}(\alpha)\big)=0 \ \ \ \text{on} \ \partial\Omega.$$
	Hence, the lemma.
\end{proof}
\begin{theorem}
The inhomogeneous Neumann boundary value problem
	\begin{equation} \label{inhomogeneous problem}
	\left\{\begin{array}{l}
	\Delta_0u = f, \ \  \text{in} \ \Omega,\\
	\partial^\perp u=g, \ \ \text{on} \ \partial\Omega,\\
	\end{array}\right.
	\end{equation}
	where $f$ and $g$ are circular functions such that $f$ is bounded and $g\in C_*(\partial\Omega),$ is solvable if and only if
	\begin{equation} \label{solvability condition}
	\int_{\Omega} f(\alpha) \ d\nu(\alpha)=\int_{\partial\Omega} g(\alpha) \ d\sigma
	(\alpha).
	\end{equation}
	The solution is given by following representation formula
	\begin{equation} \label{solution representation}
	u(\beta)=\int_{\Omega} G(\beta,\alpha) f(\alpha) \ d\nu(\alpha)-\int_{\partial\Omega} G(\beta,\alpha) g(\alpha) \ d\sigma
	(\alpha).
	\end{equation}		
\end{theorem}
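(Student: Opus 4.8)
The plan is to treat the solvability criterion and the representation formula by two complementary uses of Gaveau's identity \cref{green formula}: the criterion comes from pairing a solution against the constant function and against a fundamental-solution potential, while \eqref{solution representation} comes from pairing it against the Neumann function $G$. Throughout, the genuinely new point over the bounded case is that $\Omega$ is unbounded, so every integration by parts must be accompanied by a check that the flux through a large Kor\'anyi sphere $\mathfrak{p}(\alpha)=R$ tends to $0$ as $R\to\infty$; the hypotheses that $f$ be bounded and circular and that $g\in C_*(\partial\Omega)$ are precisely what make these checks go through.

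For the \emph{necessity} of \eqref{solvability condition} I would apply \cref{green formula} with $f_1=1$ and $f_2=u$, a solution of \eqref{inhomogeneous problem}. Since $\Delta_0 1=0$ and $\partial^\perp 1=0$, the identity reduces to $\int_\Omega \Delta_0 u\,d\nu=\int_{\partial\Omega}\partial^\perp u\,d\sigma$, that is $\int_\Omega f\,d\nu=\int_{\partial\Omega}g\,d\sigma$, once the sphere-at-infinity flux has been discarded as above.

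For \emph{sufficiency} I would reduce to the homogeneous problem already solved in \cref{main theorem}. Let $u_f$ be the volume potential of $f$ against the circular fundamental solution, normalised so that $\Delta_0 u_f=f$ in $\Omega$; because the fundamental solution decays at infinity, $u_f$ carries no flux to infinity, so Green's identity gives $\int_{\partial\Omega}\partial^\perp u_f\,d\sigma=\int_\Omega\Delta_0 u_f\,d\nu=\int_\Omega f\,d\nu$. Setting $\tilde g:=g-\partial^\perp u_f$, one checks that $\tilde g$ is circular and lies in $C_*(\partial\Omega)$, and that $v:=u-u_f$ must solve the homogeneous Neumann problem with data $\tilde g$. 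By \cref{main theorem} such a $v$ exists if and only if $\int_{\partial\Omega}\tilde g\,d\sigma=0$, and the flux identity above rewrites this condition as $\int_{\partial\Omega}g\,d\sigma=\int_\Omega f\,d\nu$, i.e. exactly \eqref{solvability condition}. Under this condition $u=u_f+v$ is the desired solution.

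To produce \eqref{solution representation} I would apply \cref{green formula} with $f_1=u$ and $f_2=G(\beta,\cdot)$ on the excised domain $\Omega\setminus B_\epsilon(\beta)$, with $B_\epsilon(\beta)$ a small Kor\'anyi ball about $\beta$, and let $\epsilon\to0$. Off its pole $G$ is $\Delta_0$-harmonic, so the volume term collapses to $-\int_\Omega G f\,d\nu$; on $\partial\Omega$ the term $u\,\partial^\perp G$ vanishes because $\partial^\perp G=0$ there, leaving $-\int_{\partial\Omega}Gg\,d\sigma$; on the inner sphere $\int_{\partial B_\epsilon(\beta)}G\,\partial^\perp u\,d\sigma\to0$ since $G\sim\mathfrak{p}^{-2n}$ against surface measure $O(\epsilon^{2n+1})$, while $\int_{\partial B_\epsilon(\beta)}u\,\partial^\perp G\,d\sigma\to -u(\beta)$ by the flux normalisation of the fundamental solution. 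Assembling the limits gives \eqref{solution representation}. The step I expect to be the main obstacle is twofold: first, the uniform control at infinity that legitimises each application of \cref{green formula} on the unbounded $\Omega$ and that is, in the end, encoded in \eqref{solvability condition} itself; and second, the boundary analysis of the single-layer contribution $-\int_{\partial\Omega}Gg\,d\sigma$, where one must run the jump computation of \cref{delperpsingle layer} for the combined kernel $G=\bar g_\beta+\bar g_{\beta^*}$, whose source and image parts become singular at the \emph{same} boundary point as $\beta\to\partial\Omega$ (because $\beta^*\to\beta$ there) and whose characteristic points are exactly what force the use of $\partial^\perp$ in place of $\partial/\partial n_0$.
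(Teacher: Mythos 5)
Your argument is correct in substance but follows a genuinely different route from the paper in two places. For the particular solution, the paper does not use your volume potential $u_f$: it takes $u_1$ to be the solution of the auxiliary \emph{Dirichlet} problem $\Delta_0u_1=f$ in $\Omega$, $u_1=0$ on $\partial\Omega$ (its problem \eqref{inhomogeneous 1}), and then forms $\tilde g=g-\partial^\perp u_1$ exactly as you do; the two choices play the same role, and in both cases the key unproved assertion is the same, namely that the particular solution exists on the unbounded $\Omega$ and that its conormal derivative lies in $C_*(\partial\Omega)$ (the paper simply says this ``can be observed''). Note also that the paper's ``sufficiency'' paragraph is written in the reverse logical direction --- it assumes solvability of \eqref{homogeneous 1} and \emph{deduces} \eqref{solvability condition} --- whereas your version (assume \eqref{solvability condition}, verify $\int_{\partial\Omega}\tilde g\,d\sigma=0$, invoke \cref{main theorem} to produce $v$, set $u=u_f+v$) is the logically correct implication, so on this point your write-up improves on the paper. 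For the representation formula, the paper splits $u$ into solutions $\tilde u_1,\tilde u_2$ of the two subproblems \eqref{inhomogeneous 2} and \eqref{homogeneous 2} and applies \cref{green formula} to each with $f_2=G(\beta,\cdot)$; your single application of \cref{green formula} with $f_1=u$, $f_2=G(\beta,\cdot)$ on the excised domain $\Omega\setminus B_\epsilon(\beta)$ is more direct and, in fact, sounder: the subproblems \eqref{inhomogeneous 2} and \eqref{homogeneous 2} are individually solvable only when $\int_\Omega f\,d\nu=0$ and $\int_{\partial\Omega}g\,d\sigma=0$ \emph{separately}, which does not follow from the mere equality \eqref{solvability condition}, while your one-shot identity applies to any solution $u$ of \eqref{inhomogeneous problem}.

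One caveat you share with the paper and should make explicit: for your $u_f$ to be well defined on the unbounded half-space, boundedness of $f$ is not enough, since $\mathfrak{p}^{-2n}$ is not integrable at infinity against the volume measure (homogeneous dimension $2n+2$); you need $f\in L^1(\Omega)$, which is implicit in \eqref{solvability condition} making sense, and to get $\partial^\perp u_f\in C_*(\partial\Omega)$ with the required rate $O\big(\abs{\zeta}^{-k}\big)$ you need some quantitative decay of $f$, not just integrability. Your sign bookkeeping in the excision argument also needs care with this paper's normalisation $\Delta_0G(\beta,\cdot)=\delta_\beta$ (so the inner-sphere flux contributes $+u(\beta)$ on the appropriate side), but the assembled limits do reproduce \eqref{solution representation}.
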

\begin{proof}
	\textit{(Necessity)} Using \cref{green formula} with a solution $u$ of \cref{inhomogeneous problem} and $f_2=1,$ we have
	$$\int_{\Omega} f(\alpha) \ d\nu(\alpha)=\int_{\partial\Omega} g(\alpha) \ d\sigma(\alpha).$$
	\textit{(Sufficiency)} Firstly, consider the following inhomogeneous boundary value problem
	\begin{equation} \label{inhomogeneous 1}
	\left\{\begin{array}{l}
	\Delta_0u_1 = f, \ \  \text{in} \ \Omega,\\
	u_1=0, \ \ \text{on} \ \partial\Omega,\\
	\end{array}\right.
	\end{equation}
	where $f\in C(\Omega).$ This problem clearly admits a solution and hence $\partial^\perp u_1$ exists on $\partial\Omega$. Here, it can be observed that $\partial^\perp u_1 \in C_*(\partial\Omega).$\\
	Now consider the following homogeneous Neumann problem, where $u_2\in\mathfrak{C}_\Omega$
	\begin{equation} \label{homogeneous 1}
	\left\{\begin{array}{l}
	\Delta_0u_2 = 0, \ \  \text{in} \ \Omega,\\
	\partial^\perp u_2=\tilde{g}, \ \ \text{on} \ \partial\Omega,\\
	\end{array}\right.
	\end{equation}
	where $\tilde{g}=g-\partial^\perp u_1.$ As one can easily show that this problem possesses a solution, we have
	$$\int_{\partial \Omega}\tilde{g}=0,$$
	which gives $\int_{\partial \Omega}g=\int_{\partial \Omega} \partial^\perp u_1.$ Using \cref{green formula}, we finally get
	$$\int_{\Omega} f(\alpha) \ d\nu(\alpha)=\int_{\partial \Omega} g(\alpha) \ d\sigma(\alpha).$$ 
	
	\noindent \textit{(Representation)} From the above discussion, we can conclude that the following problems 
	\begin{equation} \label{inhomogeneous 2}
	\left\{\begin{array}{l}
	\Delta_0u_1 = f, \ \  \text{in} \ \Omega,\\
	\partial^\perp u_1=0, \ \ \text{on} \ \partial\Omega,\\
	\end{array}\right.
	\end{equation}
	and
	\begin{equation} \label{homogeneous 2}
	\left\{\begin{array}{l}
	\Delta_0u_2 = 0, \ \  \text{in} \ \Omega,\\
	\partial^\perp u_2=g, \ \ \text{on} \ \partial\Omega,\\
	\end{array}\right.
	\end{equation}
	admit a solution. Let $\tilde{u_1}$ and $\tilde{u_2}$ respectively denote the solutions of problem \eqref{inhomogeneous 2} and problem \eqref{homogeneous 2}. Using the substitutions $f_1=\tilde{u_1}(\alpha)$ and $f_2=G(\beta,\alpha)$ in \cref{green formula}, we get
	$$\tilde{u_1}=\int_{\Omega}G(\beta,\alpha)f(\alpha) \ d\nu(\alpha).$$
	Similarly $\tilde{u_2}=-\int_{\partial\Omega}G(\beta,\alpha)g(\alpha) \ d\sigma(\alpha).$ It can be checked now that $u=\tilde{u_1}+\tilde{u_2}$ is a solution of the problem \eqref{inhomogeneous problem}.
\end{proof}
\section*{\bf{Declarations}}
\subsection*{\bf{Availability of data and materials}}
\noindent Not applicable.

\subsection*{\bf{Competing interests}}
\noindent The authors declare that no competing interests exist.

\subsection*{\bf{Funding}}
\noindent Research of the first author is supported by the Senior Research Fellowship of Council of Scientific \& Industrial Research (CSIR), Government of India under grant reference number 09/045(1479)/2017-EMR-I.

\subsection*{\bf{Authors contributions}}
\noindent The authors contributed equally to this paper. All authors read and approved the final manuscript.

\subsection*{\bf{Acknowledgements}}

\end{document}